\documentclass[11pt,oneside,reqno]{amsart}

\advance\topmargin by +0.1in \advance\textheight by +0.2in
\advance\oddsidemargin by -0.5in \advance\textwidth by +1.1in

\usepackage{amssymb,amsmath,amscd,amsthm}
\usepackage{hyperref}
\usepackage{empheq}
\usepackage{cases}
\usepackage{color}
\usepackage{enumerate}

\DeclareMathOperator{\Od}{Od}

\newcommand{\df}{\displaystyle\frac}
\newcommand{\ds}{\displaystyle\sum}

\newcommand{\CP}{{\mathbb {CP}}}

\newcommand{\HP}{{\mathbb {HP}}}
\newcommand{\OP}{{\mathbb {OP}}}
\newcommand{\QP}{{\mathbb {QP}}}
\newcommand{\F}{{\mathbb F}}
\newcommand{\Q}{{\mathbb Q}}

\newcommand{\Z}{{\mathbb Z}}

\newcommand{\LP}{{\mathcal{L}}}
\newcommand{\AP}{{\hat{A}}}

\DeclareMathOperator{\wt}{wt}
\newcommand{\A}{{\mathcal{A}}}

\makeatletter
\newtheorem*{rep@theorem}{\rep@title}
\newcommand{\newreptheorem}[2]{%
\newenvironment{rep#1}[1]{%
 \def\rep@title{#2 \ref{##1}}%
 \begin{rep@theorem}}%
 {\end{rep@theorem}}}
\makeatother

\newtheorem{theorem}{Theorem}

\newtheorem{maintheorem}{Theorem}

\newreptheorem{maintheorem}{Theorem}

\newreptheorem{maincorollary}{Corollary}

\newtheorem*{nonumbertheorem}{Theorem}

\makeatletter
\@addtoreset{theorem}{chapter}
\makeatother
\newtheorem{proposition}[theorem]{Proposition}

\newtheorem{corollary}[theorem]{Corollary}
\newtheorem{lemma}[theorem]{Lemma}
\theoremstyle{definition}

\newtheorem{remark}[theorem]{Remark}

\title{On dimensions supporting a rational projective plane}
\author{Lee Kennard}\address{Department of Mathematics, University of Oklahoma, Norman, OK 73019}\email{kennard@ou.edu}\urladdr{www.math.ou.edu/~kennard}
\author{Zhixu Su}\address{Department of Mathematics, Indiana University, Bloomington, IN 47408}\email{zhisu@indiana.edu}\urladdr{}
\date{\today}
\subjclass[2010]{57R20 (Primary), 57R65, 57R67, 57R15 (Secondary)}
\keywords{\noindent rational projective planes, characteristic classes, rational surgery realization}

\begin{document}

\begin{abstract}
A rational projective plane ($\mathbb{QP}^2$) is a simply connected, smooth, closed manifold $M$ such that $H^*(M;\mathbb{Q}) \cong \mathbb{Q}[\alpha]/\langle \alpha^3 \rangle$. An open problem is to classify the dimensions at which such a manifold exists. The Barge--Sullivan rational surgery realization theorem provides necessary and sufficient conditions that include the Hattori--Stong integrality conditions on the Pontryagin numbers. In this article, we simplify these conditions and combine them with the signature equation to give a single quadratic residue equation that determines whether a given dimension supports a $\mathbb{QP}^2$. We then confirm existence of a $\mathbb{QP}^2$ in two new dimensions and prove several non-existence results using factorization of the numerators of the divided Bernoulli numbers. We also resolve the existence question in the Spin case, and we discuss existence results for the more general class of  rational projective spaces. 
\end{abstract}

\maketitle

The rank one symmetric spaces given by the complex projective plane $\CP^2$, the quaternionic projective plane $\HP^2$, and the Cayley plane $\OP^2$ have the property of being simply connected, closed, smooth manifolds $M$ with cohomology ring isomorphic to $\Z[\alpha]/\langle \alpha^3\rangle$. These examples exist in dimensions $4$, $8$, and $16$ respectively. By Adams' resolution of the Hopf invariant one problem, no other dimension supports such a manifold (see \cite{Adams60}). In fact, Adams' proof also covers mod $2$ projective planes, i.e., manifolds as above with the property that $H^*(M;\Z_2) \cong \Z_2[\alpha]/\langle \alpha^3\rangle$ (cf. \cite{Liulevicius62,ShimadaYamanoshita61} for work on odd prime analogues). 

By analogy, a rational projective plane (denoted $\QP^2$) is a simply connected, closed, smooth manifold $M$ with rational cohomology ring isomorphic to $\Q[\alpha]/\langle \alpha^3\rangle$. The dimensions at which such a manifold exist are not yet classified. The second author began work on this question and proved that a $\QP^2$ exists in dimension $32$ but not in any other dimension less than $32$ aside from $4$, $8$, and $16$ (see \cite{Su14}). It was also shown that a necessary condition for the existence of a $\QP^2$ in dimensions $n > 4$ is that $n = 8k$ for some $k$. Fowler and the second author showed further that $k$ must be of the form $2^a + 2^b$, and that no $\QP^2$ exists in dimensions $32 < n < 128$ and $128 < n<256$. 

\begin{maintheorem}\label{thm:UpTo512}
A $\QP^2$ exists in dimension $n \leq 512$ if and only if $n \in \{4,8,16,32,128,256\}$. Moreover, no $\QP^2$ exists in any dimension $512 < n < 2^{13}$, except for five possible exceptions, $n \in \{544, 1024, 2048, 4160, 4352\}$.
\end{maintheorem}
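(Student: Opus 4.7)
The plan is to invoke the Barge--Sullivan rational surgery realization theorem to convert the existence of a $\QP^2$ in dimension $n = 8k$ into a question about rational Pontryagin classes. Because $H^*(M;\Q)$ is concentrated in degrees $0$, $4k$, and $8k$, the only potentially nonzero Pontryagin classes are $p_k = c\alpha$ and $p_{2k} = d\alpha^2$ for some $c,d \in \Q$, so the only Pontryagin numbers are $c^2 = p_k^2[M]$ and $d = p_{2k}[M]$. The signature equation $L_{2k}[M] = \pm 1$ is linear in $(c^2, d)$, and each Hattori--Stong integrality constraint imposes that a specific $\Q$-linear combination of $c^2$ and $d$ be an integer. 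Eliminating $d$ via the signature equation, every such condition becomes a congruence in $c^2$ alone.

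After this reduction, I would show that the family of Hattori--Stong conditions collapses, prime by prime, to a single congruence whose modulus is controlled by the denominators and numerators of the divided Bernoulli numbers $B_k/k$ and $B_{2k}/(2k)$. Assembling the congruences via the Chinese Remainder Theorem produces a single integral condition of the form $c^2 \equiv A_k \pmod{N_k}$ for explicit $A_k, N_k$. Since $c$ must itself lie in $\Q$, the value $c^2$ must also be a perfect rational square, so combining rationality with the congruence yields the promised single quadratic residue equation: $A_k$ must be a quadratic residue modulo each odd prime $p$ dividing $N_k$.

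With the equation in hand, I would establish existence in dimensions $128$ and $256$ by exhibiting explicit values of $c$ solving the residue equation and verifying that the resulting rational Pontryagin numbers satisfy every Barge--Sullivan condition, at which point the realization theorem produces the manifold. For non-existence, the dimension constraint $k = 2^a + 2^b$ leaves only finitely many candidate dimensions $n = 8k \leq 2^{13}$; for each $n$ outside the list of known existences and the five exceptional dimensions, I would identify a prime $p$ dividing the numerator of $B_k/k$ or $B_{2k}/(2k)$ modulo which $A_k$ is a nonresidue, ruling out any $\QP^2$ by purely number-theoretic means.

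The main obstacle is this final number-theoretic step. Because the numerators of divided Bernoulli numbers grow super-exponentially and factor in hard-to-predict ways, the obstruction search depends heavily on explicit factorization data, and one must hunt case by case for a prime witnessing a nonresidue. For the five exceptional dimensions $\{544, 1024, 2048, 4160, 4352\}$, either the relevant Bernoulli numerators are too large for complete factorization or every available prime divisor of $N_k$ makes $A_k$ a quadratic residue, so the method cannot decide them; these dimensions mark precisely the boundary of what a pure quadratic residue obstruction can achieve.
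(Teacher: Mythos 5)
Your overall strategy matches the paper's: invoke Barge--Sullivan, note that only $\langle p_k^2,\mu\rangle$ and $\langle p_{2k},\mu\rangle$ can be nonzero, combine the signature equation with the Hattori--Stong conditions into a quadratic residue equation, then obstruct via primes dividing Bernoulli numerators. But there is a genuine gap at the central step. You assert that ``the family of Hattori--Stong conditions collapses, prime by prime, to a single congruence'' assembled by the Chinese Remainder Theorem, without giving any mechanism for the collapse. This is where essentially all of the technical work lies. The paper proves (Lemmas \ref{el}, \ref{ee}, \ref{e}) that the two conditions $\langle e_1\mathord{\cdot}\LP,\mu\rangle\in\Z[1/2]$ and $\langle e_1^2\mathord{\cdot}\LP,\mu\rangle\in\Z[1/2]$ already imply every other condition $\langle e_{l}e_{m}\mathord{\cdot}\LP,\mu\rangle\in\Z[1/2]$; this requires computing each $e_l$ in terms of $p_k,p_k^2,p_{2k}$ via Newton--Girard identities, proving the divisibility $l\mid M_l(k)$ for $M_l(k)=\sum_{j=0}^{l-1}(-1)^j\binom{2l}{j}(l-j)^{2k}$, and invoking a Lipschitz--Sylvester--type integrality for $a^{2k-1}(a^{2k}-1)B_{2k}/(2k)$. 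Without this reduction your ``if'' direction fails: exhibiting a solution of a single congruence in dimensions $128$ and $256$ does not certify that \emph{all} Hattori--Stong conditions hold, so the realization theorem cannot be applied. (Note also that the oriented Hattori--Stong conditions land in $\Z[1/2]$, not $\Z$, and that the modulus involves $N_{4k}$, the numerator of $B_{4k}/(4k)$, not of $B_k/k$.)

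A secondary, smaller gap is in the non-existence criterion. You propose hunting, for each dimension, for a prime modulo which the constant $A_k$ is a nonresidue. The paper's key simplification is that after reducing the congruence modulo $N_{4k}$ the equation takes the form $(\text{unit}\cdot\bar{x})^2\equiv 2\cdot(\text{square})$ or $2(\text{unit}\cdot\bar{x})^2\equiv(\text{square})$, so the only question is whether $2$ is a quadratic residue, i.e., whether $N_{4k}$ has a prime factor $p\equiv\pm 3\pmod 8$ (Proposition \ref{N-prime}). This, together with Carlitz's congruence to compute $N_{4k}\bmod 8$ for the families $k=2^{a+i}+2^a$, is what makes the finite search over $256<n<2^{13}$ tractable from published factorization tables. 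Your description of why the five exceptional dimensions resist the method is accurate.
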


The approach is by rational surgery, as in \cite{Su14,FowlerSu16}. By the rational surgery realization theorem of Barge and Sullivan, the existence of a $\QP^2$ in a particular dimension is equivalent to the existence of formal Pontryagin classes that satisfy the Hirzebruch signature equation and the Hattori--Stong integrality conditions (see \cite{Su14}, cf. \cite{Barge76,Sullivan77}). The main step in this article is to show that two of the Hattori--Stong integrality conditions are sufficient to imply the others (see Section \ref{sec:ReducingIntegralityConditions}). We then prove that the signature equation and these two integrality conditions are equivalent to a single quadratic residue equation (see Section \ref{sec:ReducingSingleEquation}). In Sections \ref{sec:Existence} and \ref{sec:Non-existence}, we apply these simplifications to answer the existence question for $\QP^2$ in all but five dimensions less than $2^{13}$ and, in particular, to prove Theorem \ref{thm:UpTo512}.

Section \ref{sec:Non-existence} also contains some general non-existence results. They rely on congruences of Carlitz and Kummer, and obstructions from irregular prime factors of the numerators of the divided Bernoulli numbers. The results provide new infinite families of dimensions that do not support a $\QP^2$ (see Section \ref{sec:Non-existence} for precise families of dimensions obstructed).

\begin{maintheorem}\label{thm:InfinitelyManyPowersOfTwo}
There are infinitely many dimensions of the form $2^a$, and infinitely many dimensions of the form $8(2^a + 2^b)$ with $a\not =b$, that do not support the existence of a $\QP^2$.
\end{maintheorem}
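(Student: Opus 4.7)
The plan is to reduce each family to the failure, for infinitely many $k$ in an arithmetic progression, of the single quadratic residue equation from Section \ref{sec:ReducingSingleEquation} at a fixed irregular prime $p$. Concretely, I would first rewrite this equation in dimension $n = 8k$ so that the denominators and coefficients are expressed in terms of the divided Bernoulli numbers $B_{2j}/(2j)$ for $1 \leq j \leq k$. The goal at this stage is to isolate a coefficient whose $p$-adic valuation is controlled by the numerator of a single $B_{2j_0}/(2j_0)$, so that an irregular prime $p$ with $p \mid \mathrm{numerator}(B_{2j_0}/(2j_0))$ furnishes a genuine $p$-local obstruction in dimension $8j_0$.

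Next, I would invoke the Kummer congruence $B_{2k}/(2k) \equiv B_{2j_0}/(2j_0) \pmod{p}$, valid whenever $(p-1) \nmid 2k$ and $k \equiv j_0 \pmod{(p-1)/2}$, and use Carlitz's refinement to upgrade this, if needed, to a congruence modulo $p^2$ matching the precision demanded by the quadratic residue equation. This propagates the initial obstruction to a full infinite arithmetic progression of dimensions $8k$ on which the equation has no solution.

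Finally, to intersect this progression with each of the two families $\{2^a\}$ and $\{8(2^a+2^b) : a \neq b\}$, I would exploit the eventual periodicity of $2^a$ modulo $(p-1)/2$. Any residue in the orbit of $2^a$ is hit infinitely often, so if the orbit meets $j_0$ one obtains infinitely many $a$ with $2^{a+3} = 8\cdot 2^a$ obstructed. For the second family one fixes $a$ and varies $b$ (discarding $b=a$) through the eventually periodic progression determined by $2^b \equiv j_0 - 2^a \pmod{(p-1)/2}$, which is nonempty as soon as $j_0 - 2^a$ lies in the orbit of $2^{\bullet}$. It then suffices to exhibit, for each family, one concrete irregular prime $p$ whose orbit structure modulo $(p-1)/2$ realizes the required residue; this is a finite check against known tables of irregular primes.

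The step I expect to be the main obstacle is the first one: verifying that the numerator of $B_{2j_0}/(2j_0)$ genuinely controls the $p$-adic valuation of the relevant coefficient and is not cancelled by factors hidden in the signature equation or in the remaining integrality conditions that survive the reduction of Section \ref{sec:ReducingIntegralityConditions}. Once a single pair $(p, j_0)$ is shown to obstruct dimension $8j_0$, the Kummer--Carlitz lifting and the pigeonhole on powers of $2$ modulo $(p-1)/2$ are routine, and the two infinite families fall out by inspecting orbits.
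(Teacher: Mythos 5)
Your high-level architecture---obstruct a base dimension with an irregular prime, propagate along an arithmetic progression via Kummer's congruence, then intersect with the families $\{2^a\}$ and $\{8(2^a+2^b)\}$ using the periodicity of powers of $2$ modulo the relevant order---is exactly the paper's route for the powers-of-two family (Propositions \ref{N-Kummer} and \ref{kummer-37}), and a single prime, $37 \mid N_{32}$, in fact yields infinitely many dimensions of both forms. However, the step you yourself flag as the ``main obstacle'' is a genuine gap, and your description of it points in the wrong direction. The obstruction is not that $p$ divides some coefficient to an uncancellable order: after the reduction of Theorems \ref{case1} and \ref{case2}, a prime $p \mid N_{4k}$ divides the \emph{modulus} ($b_k$, resp.\ $B_k$), and reducing the congruence modulo $p$ leaves a nondegenerate equation of the shape $(\text{square unit})\,\bar{x}^2 \equiv 2\,(\text{square unit}) \pmod{p}$. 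So the true obstruction is that $2$ must be a quadratic residue modulo every prime factor of $N_{4k}$, and by the second supplementary law this fails exactly for $p \equiv \pm 3 \pmod 8$ (Proposition \ref{N-prime}). An irregular prime $p \mid N_{2j_0}$ with $p \equiv \pm 1 \pmod 8$ obstructs nothing, so your plan as stated---``an irregular prime $p$ with $p \mid \mathrm{numerator}(B_{2j_0}/(2j_0))$ furnishes a genuine $p$-local obstruction''---is false without the condition $p \equiv \pm 3 \pmod 8$, and the concluding ``finite check against tables of irregular primes'' has no target. Relatedly, no refinement of Kummer modulo $p^2$ is needed: the obstruction only requires $p \mid N_{4k}$, i.e.\ the ordinary Kummer congruence modulo $p$.

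For comparison, the paper treats the family $8(2^a+2^b)$, $a\neq b$, by a more elementary device that avoids factoring $N_{4k}$ altogether: Carlitz's congruence $2B_{4k}\equiv 1 \pmod{2^{a+3}}$ (used $2$-adically, not as a $p^2$-refinement of Kummer) shows that $N_{4k} \bmod 8$ depends only on $i$ when $k=2^a(2^i+1)$, and a finite computation for $a=0$, $i\in\{1,2,3,5,7\}$ gives $N_{4k}\equiv \pm 3 \pmod 8$, which forces $N_{4k}$ to have a prime factor $\equiv \pm 3 \pmod 8$ (Corollary \ref{N-mod8}). Your Kummer-propagation route also covers this family once the correct quadratic-residue obstruction is in place, so the missing identification above is the essential defect.
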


As mentioned above, Fowler and the second author proved that no $\QP^2$ exists in a dimension not of the form $2^a$ or $8(2^a + 2^b)$ for some $a\not =b$ (see \cite{FowlerSu16}). It remains an open question whether infinitely many dimensions, and whether any dimension of the latter form, can support a rational projective plane. 

We specialize in Section \ref{sec:Spin} to the Spin case, and we classify the dimensions that support a Spin $\QP^2$. Note that $\HP^2$ and $\OP^2$ are examples in dimensions $8$ and $16$.

\begin{maintheorem}\label{thm:Spin}
A Spin $\QP^2$ exists in dimension $n$ if and only if $n \in \{8,16\}$.
\end{maintheorem}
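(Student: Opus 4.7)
The plan is to combine rational surgery in the Spin category with the analysis of the earlier sections. The Spin version of Barge--Sullivan (cf.~\cite{Sullivan77}) requires, in addition to the Hirzebruch signature equation, that every twisted $\AP$-number $\AP(M)\cdot\operatorname{ch}(\xi)[M]$ be an integer for natural complex bundles $\xi$ built from the formal tangent. For a $\QP^2$ in dimension $8k$, the Pontryagin classes concentrate in degrees $4k$ and $8k$, so the only non-trivial characteristic numbers are $u^2 := p_k^2[M]$ and $v := p_{2k}[M]$, with $u\in\Q$, so $u^2$ must be the square of a rational (hence non-negative).

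The dimensions $4,8,16$ are handled directly. For $n=4$ the signature equation forces $p_1=\pm 3\alpha$, hence $\AP[M]=\mp 1/8\notin\Z$, ruling out Spin. For $n=8,16$ the manifolds $\HP^2$ and $\OP^2$ are Spin $\QP^2$'s. For $n=8k$ with $k\geq 3$, the signature equation $L_{2k}[M]=\pm 1$ and the $\AP$-integrality $\AP[M]\in\Z$ give two $\Q$-linear conditions on $(v,u^2)$, whose coefficients are explicit rational functions of the Bernoulli numbers $B_{2k}$ and $B_{4k}$. Eliminating $v$ yields a relation of the form
$$u^2 \;=\; A_k\,N + B_k^{\pm},$$
where $N=\AP[M]\in\Z$, $A_k$ and $B_k^{\pm}$ are in $\Q$ depending only on $k$, and the sign matches that of the signature. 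For $k=1,2$ this recovers $u=\pm 2$ and $u=\pm 6$ at $N=0$, corresponding to $\HP^2$ and $\OP^2$.

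The main obstacle is to show that, for every $k\geq 3$, no $N\in\Z$ and no choice of signature sign makes $A_k N+B_k^{\pm}$ the square of a rational number. This is a Diophantine quadratic-residue problem analogous to (but stricter than) the one isolated in Section~\ref{sec:ReducingSingleEquation} for the general existence question. The natural strategy is to use Kummer-type congruences for $B_{2k}$ and $B_{4k}$ (in the spirit of Section~\ref{sec:Non-existence}) to control the $2$-adic valuations of $A_k$ and $B_k^{\pm}$, exploiting that the $\AP$-denominators in degree $8k$ grow strictly faster $2$-adically than the $L_{2k}$-denominators; one then shows that $A_k N+B_k^{\pm}$ always lands in a $2$-adic residue class containing no rational squares. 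If the single-signature, single-integer argument is not enough, it can be reinforced by a further twisted $\AP$-integrality condition such as $\AP(M)\cdot\operatorname{ch}(TM\otimes\C)[M]\in\Z$, which adds another $\Q$-linear constraint and over-determines the system.
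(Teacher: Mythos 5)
Your framework is right (Spin rational surgery reduces everything to Diophantine conditions on $p_k^2[M]$ and $p_{2k}[M]$, and $\HP^2$, $\OP^2$ settle dimensions $8$ and $16$), but the heart of the theorem --- non-existence for $k\geq 3$ --- is left as an unproven ``obstacle'' with only a sketched strategy, and that strategy as stated does not work. If you use only the signature equation and $\AP[M]\in\Z$, the $2$-adic bookkeeping closes up without a contradiction: writing $s_{2k}=-2^{4k+1}(2^{4k-1}-1)a_{2k}$ and eliminating $y$, the rapid $2$-adic growth of the denominator of $a_{2k}$ is exactly cancelled by the factor $2^{4k+1}$, and one only recovers $\nu_2\bigl(p_k^2[M]\bigr)=4-2\wt(k)$, i.e.\ the already-known constraint $\wt(k)\leq 2$. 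There is no residue class free of squares here for large $k$; two linear conditions on two unknowns simply cannot over-determine the system by themselves.

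The missing ingredient is the quadratic Hattori--Stong condition $\langle e_1e_1\mathord{\cdot}\AP,\mu\rangle=\xi/[(2k-1)!]^2\in\Z$, which forces $\xi=p_k^2[M]$ to be divisible by $[(2k-1)!]^2$ and hence to have $2$-adic valuation at least $2\bigl(2k-\nu_2(k)-\wt(k)-1\bigr)$, a quantity growing linearly in $k$. Feeding this into the signature and $\AP$-genus equations (via the substitution $z=2y-\xi$ and the identity relating $s_{2k}$ to $a_{2k}$) yields $\nu_2(2\sigma)\geq 4k-2\nu_2(k)-5$, which is incompatible with $\sigma=1$ once $k\geq 3$. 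This is exactly the extra ``twisted integrality'' input you gesture at in your last sentence, but it must be the main engine of the proof, not a reinforcement, and the specific condition and the resulting valuation count need to be written out. (Your handling of $n=4$ via $\AP[M]=\mp 1/8\notin\Z$ is a fine alternative to the paper's even-intersection-form argument.)
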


The necessary and sufficient conditions for existence of Spin $\QP^2$ are analogous to the smooth case except the Spin Hattori--Stong integrality conditions involve the $\hat A$--genus instead of $\LP$--genus. The obstructions coming from the signature equation, integrality of $\hat A$--genus, and one of the Spin Hattori--Stong conditions are sufficient to prove no Spin $\QP^2$ exists in dimensions $n > 16$.

Finally, we discuss existence questions for rational projective spaces. Extending the notation above, let $\QP^n_d$ denote a simply connected, smooth, closed manifold in dimension $dn$ with rational cohomology isomorphic to $\Q[\alpha]/\langle\alpha^{n+1}\rangle$, $|\alpha|=d$. For example, a $\QP^2_8$ is a rational Cayley plane. The main existence result we prove in Section \ref{sec: projective-space} is the following.

\begin{maintheorem}\label{thm:QPn}
If a $\QP^2_{4k}$ exists, then a $\QP^{2m}_{4k/m}$ exists whenever $4k/m \in 2\Z$.
\end{maintheorem}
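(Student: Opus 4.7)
The plan is to invoke the Barge--Sullivan rational surgery realization theorem, transferring the Pontryagin class data from the given $\QP^2_{4k}$ to formal Pontryagin classes on the rational homotopy type of $\QP^{2m}_{4k/m}$. Let $M$ be a $\QP^2_{4k}$ with rational cohomology generator $\alpha \in H^{4k}(M;\Q)$. Since $H^i(M;\Q)=0$ for $i\not\in\{0,4k,8k\}$, the only possibly non-vanishing rational Pontryagin classes of $M$ are $p_k(M)=a\alpha$ and $p_{2k}(M)=b\alpha^2$ for some $a,b\in\Q$, and the only non-vanishing Pontryagin numbers of $M$ are $p_k^2[M]=a^2$ and $p_{2k}[M]=b$.

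Let $N$ denote the rational Poincar\'e duality space of dimension $8k$ with cohomology $\Q[\beta]/\langle\beta^{2m+1}\rangle$ and $|\beta|=4k/m$; the hypothesis $4k/m\in 2\Z$ guarantees $|\beta|$ is a positive even integer, so $N$ is simply connected and formal (with pure minimal Sullivan model $(\Lambda(x,y),\,dx=0,\,dy=x^{2m+1})$). I would then define formal Pontryagin classes on $N$ by $\tilde p_k=a\beta^m$, $\tilde p_{2k}=b\beta^{2m}$, and $\tilde p_i=0$ for all other $i$. Since $\beta^m\in H^{4k}(N;\Q)$ and $\beta^{2m}\in H^{8k}(N;\Q)$ lie in the correct degrees, this choice is consistent, and the only non-vanishing Pontryagin numbers of $N$ are $\tilde p_k^2[N]=a^2$ and $\tilde p_{2k}[N]=b$, matching $M$ exactly. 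Because the signature equation and each Hattori--Stong integrality condition amount to integrality of universal polynomials in Pontryagin numbers, and these numbers agree between $M$ and $N$, the Barge--Sullivan conditions for $N$ follow from those for $M$, and the theorem then produces a smooth closed simply connected manifold realizing $N$, i.e., a $\QP^{2m}_{4k/m}$.

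The step requiring the most scrutiny is the claim that every Barge--Sullivan condition on $N$ reduces, via this sparse choice of formal Pontryagin classes, to an already-verified condition on $M$. For the signature equation this is immediate: $L_{2k}(\tilde p_1,\ldots,\tilde p_{2k})$ evaluated on the fundamental class of $N$ is the same polynomial in $a,b$ as for $M$, and both signatures equal $+1$ (since $\alpha^2$ and $\beta^{2m}$ are respectively dual to the fundamental classes). For the Hattori--Stong integrality conditions, the same universality argument drives the reduction, though it is worth verifying carefully that no extra integrality constraint arises from the richer integral cohomology structure of $N$. The remaining Barge--Sullivan hypotheses---formality, simple connectivity, and Poincar\'e duality of $N$---are classical for rational projective spaces.
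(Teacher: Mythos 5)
Your proposal is correct and follows essentially the same route as the paper: both arguments set all formal Pontryagin classes to zero except $p_k$ and $p_{2k}$, place them on the appropriate powers of the degree-$(4k/m)$ generator, and observe that the Barge--Sullivan conditions (signature equation, Hattori--Stong integrality, and the intersection-form condition) then depend only on the Pontryagin numbers $\langle p_k^2,\mu\rangle$ and $\langle p_{2k},\mu\rangle$, which are already realized by the given $\QP^2_{4k}$.
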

We illustrate this theorem with some examples:
	\begin{enumerate}[{I.}]
	\item By Theorems \ref{thm:UpTo512} and \ref{thm:QPn}, higher dimensional analogues $\QP^n_8$ of rational Cayley planes exist for $n \in \{4,16,32\}$. Note that $\QP^n_8$ exist for all odd $n$ (see \cite{FowlerSu16}).
	\item No $\QP^2_{32}$ exists, however there exist higher dimensional analogues, $\QP_{32}^4$ and $\QP_{32}^8$.
	\end{enumerate}
In light of the last example, it may be asked whether every power of two can be realized as the degree $d$ of a $\QP^n_d$ for some $n \geq 2$. The answer is yes by Theorem \ref{thm:QPn} if infinitely many dimensions equal to a power of two support a $\QP^2$, but this too remains an open question.

\subsection*{Acknowledgements}
We want to thank Matthias Kreck and Don Zagier for email communication regarding their independent work on this problem, which includes results equivalent to Theorem A stated above, as well as nonexistence results beyond the range of dimensions we considered in this paper. 

We also want to thank Yang Su and Jim Davis for communication about this problem and Sam Wagstaff for discussions on Bernoulli numbers that made possible the proof of Theorem \ref{thm:InfinitelyManyPowersOfTwo}. Finally we are grateful to the referee for carefully reading and making suggestions to improve the paper. The first author was supported by NSF Grant DMS 1622541.

\section{Preliminaries}

We consider the question of whether a $\QP^2$ exists in dimension $n$. By the graded commutativity of the cup product, the dimension $n$ must be a multiple of four. Moreover, the second author proved that, except for dimension four, a $\QP^2$ can exist only if $n = 8k$ for some integer $k$ (see \cite{Su14}).

We first outline the necessary and sufficient condition for the existence of a simply connected, closed, smooth manifold realizing a prescribed rational cohomology ring. If $M^{8k}$ is an $8k$--dimensional $\QP^2$, then all its rational Pontryagin classes vanish except for $p_k \in H^{4k}(M;\Q)$ and $p_{2k} \in H^{8k}(M;\Q)$. Hence the total $\LP$ class can be written as
$$\LP=1+s_kp_k+s_{k,k}p_k^2+s_{2k}p_{2k}.$$
As derived in \cite{MilnorStasheff} and \cite{Anderson69}, the coefficients are
	\begin{eqnarray*}
	s_k&=&\df{2^{2k}(2^{2k-1}-1)|B_{2k}|}{(2k)!},\\
	s_{k,k}&=&\frac{1}{2}(s_{k}^2-s_{2k}).
	\end{eqnarray*}
With a choice of orientation, we may assume that the signature of $M$ is $1$. The following necessary conditions must hold true:
\begin{enumerate}
\item
(Hirzebruch signature equation)\ \  \begin{equation}\langle \LP(p_k,p_{2k}),\mu\rangle=s_{k,k}\langle p_k^2,\mu\rangle+s_{2k}\langle p_{2k},\mu\rangle=1,\end{equation}
\item
(Hattori--Stong integrality condition from $\Omega^{SO}_{8k}$)\
\begin{equation}\langle\Z[e_1,\,e_2, \ldots]\mathord{\mathord{\cdot}}\LP\,,\,\mu\rangle\in\Z[1/2]\end{equation}
\item
(Pontryagin numbers of $\QP^2$)
\begin{equation}\langle p_k^2,\mu\rangle= x^2 \mbox{ and } \langle p_{2k},\mu\rangle=y \ \ \mbox{ for some integers } x \mbox{ and } y
\end{equation}
\end{enumerate} 

Condition (3) is a consequence of the rational cohomology ring structure of $M$. Since $H^*(M;\Q)=\Q[\alpha]/\langle \alpha^3\rangle$, where $\alpha$ is any generator in $H^{4k}(M;\Q)$, we may write the Pontryagin classes $p_k=a\alpha$ and $p_{2k}=b\alpha^2$ for some rational numbers $a$ and $b$. By the choice of orientation, the rational intersection form of $M$ is isomorphic to $\langle 1 \rangle$ and the signature is 1, so we must have $\langle \alpha^2, \mu\rangle= r^2$ for some rational number $r$, then the Pontryagin numbers of $M$ can be expressed as $\langle p_k^2,\mu\rangle=a^2r^2= x^2$ and $\langle p_{2k},\mu\rangle=br^2= y$, where $x$ and $y$ must be integers because the Pontryagin numbers of a smooth manifold must be integers. With this substitution, the signature equation (1) can be written as 
$$s_{k,k}x^2+s_{2k}y= 1.$$

The Hattori--Stong integrality condition (2) characterizes the integral lattice in $\Q^{p(8k)}$ formed by all possible Pontryagin numbers of a smooth $8k$-dimensional manifold in $\Omega^{SO}_{8k}$. The $e_l$ classes are defined as follows. If one writes the total Pontryagin class formally as $p=\displaystyle\prod_i(1+x_i^2)=\prod_i(1+t_i)$, the $k$-th Pontryagin class can be expressed as the $k$-th elementary symmetric function of $t_i$.
 $$p_k=\sigma_k(t)=\ds_{i_1<\cdots<i_k} t_{i_1}t_{i_2}\cdots t_{i_k}.$$

Consider the variable $T_i$ that is written as a power series of $t_i$ as follows:$$T_i:=e^{\sqrt{t_i}}+e^{-\sqrt{t_i}}-2=\ds_{n=1}^{\infty} \df{2t_i^n}{(2n)!}=2\left(\df{t_i}{2!}+\df{t_i^2}{4!}+\ldots\right).$$
We denote the $l$-th elementary symmetric functions of the variable $T_i$ as 
$$e_{l}:=\sigma_l(T)=\ds_{i_1<\cdots<i_l} T_{i_1}T_{i_2}\cdots T_{i_l}.$$
Since each $e_l$ class can be written as a rational linear combination of monomials of the Pontryagin classes $p_k$, in our case of $\QP^2$, each $e_l$ class can be written as a rational linear combination of $p_k^2$ and $p_{2k}$. Therefore the Hattori--Stong Integrality condition (2) can be expressed as a set of integrality conditions on the Pontryagin numbers $\langle p_k^2,\mu\rangle=x^2$ and $\langle p_{2k},\mu\rangle=y$. 

As discussed in \cite{Su14}, by the rational surgery realization theorem (\cite{Barge76} and \cite{Sullivan77}), the above necessary conditions are also the sufficient conditions for the existence of a $\QP^2$. More precisely, there exists a smooth closed manifold $M$ in dimension $n=8k$ such that $H^*(M;\Q)=\Q[\alpha]/\langle \alpha^3 \rangle$ if and only if there exist pair of integers $x^2$ and $y$ which realize the Pontraygin numbers of a $\QP^2$ as in (3), and they satisfy the signature equation (1) and the Hattori--Stong integrality conditions in (2).  So the problem is reduced to solving a system of Diophantine equations, which is purely an elementary number theoretic problem.

\section {Reducing the integrality conditions}\label{sec:ReducingIntegralityConditions}

In the proof of existence of $32$-dimensional $\QP^2$ in \cite{Su14}, the second author explicitly computed the Hattori--Stong integrality condition in dimension $32$. The calculation involved concretely writing each $e_l$ classes in Condition (2) in terms of the Pontraygin classes $p_4^2$ and $p_8$. In this section, we simplify the Hattori--Stong integrality condition in  our case of $\QP^2$ to a much simpler form. The argument works for any dimension. 

\begin{theorem}\label{SOiff}
There exists a $\QP^2$ in dimension $n=8k$ if and only if there are integers $x$ and $y$ that satisfy the following conditions:
\begin{subnumcases}
\ \  s_{k,k}x^2+s_{2k}y= 1 \label{signature}\\
 \left(\df{(-1)^{k+1}s_k}{(2k-1)!}+\df{1}{2(4k-1)!}\right)x^2 -\df{y}{(4k-1)!}\in\Z[1/2] \label{e1L}\\
 \df{x^2}{[(2k-1)!]^2}\in\Z[1/2] \label{e1e1L}
\end{subnumcases}
Moreover, for any pair of integers $x$ and $y$ satisfying the above conditions, there is a $\QP^2$ whose Pontryagin numbers satisfy $\langle p_k^2,\mu\rangle=x^2$ and $\langle p_{2k},\mu\rangle=y$.
\end{theorem}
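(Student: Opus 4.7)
The plan is to prove both directions of the equivalence using the Barge--Sullivan framework outlined in the preliminaries. The three conditions listed in the theorem are necessary pieces of the full Barge--Sullivan setup: \eqref{signature} is the signature equation (1), the integrality of $x, y$ is condition (3), and I claim that \eqref{e1L} and \eqref{e1e1L} are the instances of the Hattori--Stong condition (2) coming from the monomials $e_1$ and $e_1^2$ in $\Z[e_1, e_2, \ldots]$. For sufficiency, I would then show that these two monomial conditions already imply the full family of Hattori--Stong conditions in this restricted setting, so that Barge--Sullivan produces the desired $\QP^2$ with the prescribed Pontryagin numbers.

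To verify that \eqref{e1L} and \eqref{e1e1L} are the specific $e_1$ and $e_1^2$ instances, I compute the power series expansion of $e_1$ using Newton's identities. Since only $p_k$ and $p_{2k}$ survive rationally, $\sum_i t_i^n = 0$ for all $n \notin \{k, 2k\}$, while $\sum_i t_i^k = (-1)^{k-1} k p_k$ and $\sum_i t_i^{2k} = k p_k^2 - 2k p_{2k}$. Substituting into $e_1 = \sum_{n \geq 1} \frac{2}{(2n)!} \sum_i t_i^n$ yields
\[
(e_1)_{4k} = \frac{(-1)^{k-1}}{(2k-1)!}\, p_k, \qquad (e_1)_{8k} = \frac{p_k^2 - 2 p_{2k}}{2(4k-1)!}.
\]
Multiplying by $\LP = 1 + s_k p_k + s_{k,k} p_k^2 + s_{2k} p_{2k}$ and pairing the degree-$8k$ piece with $\mu$ gives \eqref{e1L}. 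Since $(e_1)_0 = 0$, the degree-$8k$ part of $e_1^2$ reduces to $[(e_1)_{4k}]^2 = p_k^2 / [(2k-1)!]^2$, giving \eqref{e1e1L}.

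Sufficiency is the crux. By linearity, it suffices to treat each monomial $m = e_{l_1} \cdots e_{l_r}$ with $\sum l_i \leq 2k$ (otherwise the degree-$8k$ part of $m \cdot \LP$ vanishes). Writing that top piece as $A_m p_k^2 + B_m p_{2k}$ for rationals $A_m, B_m$, the goal is to show $A_m x^2 + B_m y \in \Z[1/2]$; equivalently, to express it as a $\Z[1/2]$-linear combination of the three already-known values provided by \eqref{signature}, \eqref{e1L}, and \eqref{e1e1L}. Conceptually, the space of top characteristic numbers of a $\QP^2$ has $\Q$-dimension $2$, so the Hattori--Stong conditions restrict $(x^2, y)$ to a rank-$2$ $\Z[1/2]$-sublattice of $\Q^2$, and such a sublattice is generated by at most two elements above the signature constraint. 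The main obstacle, and the most delicate part, is the explicit verification that \eqref{e1L} and \eqref{e1e1L} in fact generate this sublattice. My strategy is to expand each monomial $m$ via Newton's identities to obtain $A_m, B_m$ in closed form, then perform careful bookkeeping of the $p$-adic valuations of these coefficients relative to the factorial denominators $(2k-1)!$ and $(4k-1)!$, proceeding inductively on the number of factors in $m$ and exploiting the power-series structure $T_i = 4 \sinh^2(\sqrt{t_i}/2)$ together with the simplification $(e_1)_0 = 0$. Once full Hattori--Stong holds, Barge--Sullivan yields the desired $\QP^2$, and the ``Moreover'' statement about matching Pontryagin numbers follows directly from their construction.
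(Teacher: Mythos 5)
Your setup and the necessity half are correct and match the paper's route: conditions \eqref{e1L} and \eqref{e1e1L} are exactly the $e_1\cdot\LP$ and $e_1^2\cdot\LP$ instances of Hattori--Stong, and your computation of the degree-$4k$ and degree-$8k$ parts of $e_1$ via $\sum_i t_i^k = (-1)^{k-1}kp_k$ and $\sum_i t_i^{2k} = kp_k^2 - 2kp_{2k}$ agrees with the paper's Lemma giving Equation \eqref{e1a}. You also correctly identify that the whole content of the theorem is the sufficiency step: showing the two chosen instances imply all $\langle e_l\cdot\LP,\mu\rangle\in\Z[1/2]$ and $\langle e_le_m\cdot\LP,\mu\rangle\in\Z[1/2]$.

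But that step is where your proposal stops being a proof and becomes a plan, and the plan as stated would not go through by ``careful bookkeeping of $p$-adic valuations'' alone. Two specific inputs are missing. First, writing $m_l = M_l(2k)e_1 + [M_l(k)-M_l(2k)]\frac{(-1)^{k+1}}{(2k-1)!}p_k$ with $M_l(k)=\sum_{j=0}^{l-1}(-1)^j\binom{2l}{j}(l-j)^{2k}$, one needs the integrality of $M_l(k)/l$; this follows from the divisibility $a/\gcd(a,b)\mid\binom{a}{b}$ applied to $\binom{2l}{j}(l-j)$, and it is what reduces every $e_le_m\cdot\LP$ to a $\Z$-multiple of $e_1e_1\cdot\LP$. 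Second, and more seriously, the single-$e_l$ condition produces a leftover term proportional to $\frac{M_l(k)-M_l(2k)}{l(2k-1)!}\langle p_k\cdot\LP,\mu\rangle$, and $\langle p_k\cdot\LP,\mu\rangle/(2k-1)!$ carries the factor $(2^{2k-1}-1)|B_{2k}|/(2k)$ whose denominator, by von Staudt--Clausen, involves every prime $p$ with $p-1\mid 2k$. Cancelling that denominator against $(l-j)^{2k}\bigl((l-j)^{2k}-1\bigr)$ is precisely the Lipschitz--Sylvester theorem that $a^{2k}(a^{2k}-1)B_{2k}/(2k)\in\Z$ (in the form with $a^{2k}$ replaced by $a^{2k-1}$). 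This is a classical Bernoulli-number divisibility theorem, not a consequence of expanding monomials by Newton's identities, and your outline gives no indication of it. Your lattice-theoretic heuristic (``a rank-$2$ sublattice is generated by two elements'') is also not an argument that these \emph{particular} two conditions generate it; that is exactly the claim to be proved.
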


We spend the rest of this section on the proof. Condition (1), the signature equation, is the same as Equation \eqref{signature}, and Condition (3) on the integrality of the Pontryagin numbers is implicit in the statement. Therefore it is sufficient to show that the Hattori--Stong integrality conditions stated in Condition (2) are equivalent to Equations (\ref{e1L}) and (\ref{e1e1L}). Since a $\QP^2$ satisfies $p_\omega=0$ except possibly for $p_k$, $p_k^2$ and $p_{2k}$, Condition (2) is equivalent to the claim that $\langle e_l \mathord{\cdot}\LP, \mu\rangle \in \Z[1/2]$ for all $1 \leq l \leq 2k$ and that $\langle e_le_m\mathord{\cdot}\LP,\mu\rangle\in\Z[1/2]$ whenver $1 \leq l + m \leq 2k$.

In the following lemma, we calculate the $e_l$ class in terms of the Pontryagin classes.

\begin{lemma}\label{el}
If $p_\omega=0$ except $p_k$, $p_k^2$ and $p_{2k}$, then
\begin{equation}\label{e1a}
e_1=\df{(-1)^{k+1}}{(2k-1)!}\,p_k + \df{1}{2(4k-1)!}\,p_k^2+\df{-1}{(4k-1)!}\,p_{2k}
\end{equation}
and
\begin{subequations}
\begin{eqnarray}
e_l&=&\df{(-1)^{l+1}}{l}\left[M_l(2k)e_1+[M_l(k)-M_l(2k)]\df{(-1)^{k+1}}{(2k-1)!}\,p_k\right]+\df{1}{2}\ds_{i=1}^{l-1}e_ie_{l-i} \label{el1}\\
&=&\df{(-1)^{k+l}M_l(k)}{l(2k-1)!}\,p_k +\df{(-1)^{l}M_l(2k)}{l(4k-1)!}\,p_{2k}+ p_k^2 \mbox{ term } \label{el2}
\end{eqnarray}
\end{subequations}
where $M_l(k)=\ds_{j=0}^{l-1}(-1)^j\binom{2l}{j}(l-j)^{2k}$.
\end{lemma}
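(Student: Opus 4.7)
I would work in the graded quotient ring $R$ of $\Q[p_1, p_2, \ldots]$ obtained by killing every monomial except $1$, $p_k$, $p_{2k}$, and $p_k^2$; thus $R$ has $\Q$-basis $\{1, p_k, p_{2k}, p_k^2\}$. In $R$ every positive-degree class splits into a degree-$k$ component (a multiple of $p_k$) and a degree-$2k$ component (a combination of $p_{2k}$ and $p_k^2$), and since $p_k p_{2k} = p_{2k}^2 = 0$, any product of two positive-degree classes reduces to a pure $p_k^2$-multiple determined solely by the degree-$k$ parts of its factors.

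The crux is computing the Newton power sums $\tau_l = \sum_i T_i^l = \sum_i f(t_i)^l$, where $f(t) = e^{\sqrt t} + e^{-\sqrt t} - 2$. Under the substitution $u = e^{\sqrt t}$ one has $f(t) = u^{-1}(u-1)^2$, and binomial expansion combined with the pairing $u^v \leftrightarrow u^{-v}$ gives
\[
f(t)^l = u^{-l}(u-1)^{2l} = (-1)^l \binom{2l}{l} + 2 \sum_{v=1}^{l} (-1)^{l-v} \binom{2l}{l - v} \cosh(v\sqrt t).
\]
Reading off the coefficient of $t^m$ for $m \geq 1$ and reindexing via $j = l - v$ yields $[t^m] f(t)^l = \frac{2 M_l(m)}{(2m)!}$. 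Summing over $i$ gives $\tau_l = \sum_{m \geq 1} \frac{2 M_l(m)}{(2m)!} s_m$ where $s_m = \sum_i t_i^m$, and Newton's identities reduced modulo the defining ideal of $R$ collapse all $s_m$ to zero except $s_k \equiv (-1)^{k+1} k\, p_k$ and $s_{2k} \equiv k\, p_k^2 - 2k\, p_{2k}$. Substituting produces an explicit formula for $\tau_l \in R$; since $e_1 = \tau_1$, this immediately proves $(\ref{e1a})$.

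For $l \geq 2$ I would feed this into the classical Newton recursion $l\, e_l = (-1)^{l-1} \tau_l + \sum_{i=1}^{l-1} (-1)^{i-1} e_{l-i} \tau_i$. Because each product $e_{l-i} \tau_i$ has no degree-$k$ component in $R$, extracting the degree-$k$ part of this identity gives $l (e_l)_k = (-1)^{l-1} (\tau_l)_k$, yielding the $p_k$-coefficient in $(\ref{el2})$; the $p_{2k}$-coefficient comes from the degree-$2k$ part analogously (products contribute only to $p_k^2$). To obtain the symmetric form $(\ref{el1})$, I would rewrite each $(e_{l-i}\tau_i)_{2k}$ as $(e_{l-i})_k (\tau_i)_k$, substitute the just-determined formula for $(e_l)_k$, and verify the identity
\[
\sum_{i=1}^{l-1} \frac{M_i(k) M_{l-i}(k)}{l - i} = \frac{l}{2} \sum_{i=1}^{l-1} \frac{M_i(k) M_{l-i}(k)}{i(l - i)},
\]
which follows from $\frac{l}{i(l-i)} = \frac{1}{i} + \frac{1}{l-i}$ combined with the $i \leftrightarrow l - i$ symmetry. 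Repackaging the $p_k$, $p_k^2$, and $p_{2k}$ contributions of $\tau_l$ as $M_l(2k) e_1 + (M_l(k) - M_l(2k)) \frac{(-1)^{k+1}}{(2k-1)!} p_k$ is then pure algebra using the explicit form of $e_1$. The main obstacle is the graded bookkeeping; the observation that products in $R$ collapse to $p_k^2$-terms driven by the degree-$k$ parts of the factors is what makes the Newton recursion tractable.
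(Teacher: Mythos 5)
Your proposal is correct and follows essentially the same route as the paper: compute the power sums $\sum_i T_i^l$ by expanding $\left(e^{\sqrt{t}/2}-e^{-\sqrt{t}/2}\right)^{2l}$ to identify $M_l(m)$, convert $m_k(t)$ and $m_{2k}(t)$ into $(-1)^{k+1}k\,p_k$ and $k\,p_k^2-2k\,p_{2k}$ via Newton's identities, and pass from power sums to the $e_l$ using the vanishing of all products of three or more positive-degree classes. The only (cosmetic) divergence is that the paper quotes the non-recursive Newton--Girard expansion of $m_l$ in the $e_\omega$, whose length-$>2$ terms vanish and whose quadratic terms already carry the coefficient $l/2$, whereas you use the recursive Newton identity and then verify the symmetrization identity $\sum_i M_i(k)M_{l-i}(k)/(l-i)=\tfrac{l}{2}\sum_i M_i(k)M_{l-i}(k)/(i(l-i))$ --- a check you carry out correctly.
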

\begin{proof}
For any partition $\omega=(\omega_1,  \cdots,\omega_r)$, there is the monomial symmetric polynomial $m_{\omega}(t)=\ds_{i_1<\cdots<i_{r}} t_{i_1}^{\omega_1}t_{i_2}^{\omega_2}\cdots t_{i_{r}}^{\omega_{r}}$. Let us denote the $m_l$ polynomial of the variable $T_i$ by
$$m_l:=m_{l}(T)=\ds_{i} T_{i}^l.$$
Note, in particular, that 
	\begin{eqnarray}\label{Ttot}
	m_1 = \sum_i T_i = \sum_{k=0}^\infty \frac{2}{(2k)!} \sum_i t_i^k = \sum_{k=0}^\infty \frac{2}{(2k)!} m_k(t).
	\end{eqnarray}
Similar to the calculation carried out in \cite{Barge74} page 488, we find the coefficient of $p_k$ and $p_k^2$ in $m_{l}$. Let $\{-\}_{k}$ denote the degree $k$ terms in an expression. We have
 \begin{eqnarray}\label{m}
\{m_{l}\}_k=\left\{\ds_{i}\left(e^{\sqrt{t_i}}+e^{-\sqrt{t_i}}-2\right)^l\right\}_k&=&\left\{\ds_{i}\left(e^{\sqrt{t_i}/2}-e^{-\sqrt{t_i}/2}\right)^{2l}\right\}_k\nonumber\\
	&=&\left\{\ds_{i}\ds_{j=0}^{2l}(-1)^j\binom{2l}{j}e^{\sqrt{t_i}(l-j)}\right\}_k\nonumber\\
	&=&\ds_{i}\ds_{j=0}^{2l}(-1)^j\binom{2l}{j}\df{t_i^{k}(l-j)^{2k}}{(2k)!}\nonumber\\
	&=&\frac{m_k(t)}{(2k)!} \sum_{j=0}^{2l} (-1)^j \binom{2l}{j} (l-j)^{2k}\nonumber\\
	&=&\frac{2}{(2k)!} m_k(t) M_l(k)\nonumber
\end{eqnarray}
Using Equation \eqref{Ttot} and the fact that $m_l$ only contains terms of degree at least $l$, this implies that
$$\{m_{l}\}_k=\begin{cases}
    M_l(k)\{m_1\}_k=M_l(k)\{e_1\}_k& \text{if } l\leq k\\
    0             &  \text{if } l> k
\end{cases}
$$
By the Newton-Girard identities 
relating the monomial symmetric function $m_k(t)$ with the elementary symmetric functions $p_i=\sigma_i(t)$, 

\begin{eqnarray}e_1=m_1=\{m_1\}_k+\{m_1\}_{2k}&=&\df{2}{(2k)!}m_k(t)+\df{2}{(4k)!}m_{2k}(t)\nonumber\\
&=&\df{2}{(2k)!}(-1)^{k+1}k\,p_k+\df{2}{(4k)!}(k\,p_k^2-2k\,p_{2k})\nonumber\\
&=&\df{(-1)^{k+1}}{(2k-1)!}\,p_k+\df{1}{2(4k-1)!}\,p_k^2+\df{-1}{(4k-1)!}\,p_{2k} \label{e1b}
\end{eqnarray}
\begin{eqnarray}m_l=\{m_l\}_k + \{m_l\}_{2k}	&=& M_l(k)\{e_1\}_k + M_l(2k)\{e_1\}_{2k}\nonumber\\
&=&M_l(2k)\left(\{e_1\}_k+\{e_1\}_{2k}\right) + \left[M_l(k)-M_l(2k)\right]\{e_1\}_k\nonumber\\
&=&M_l(2k)e_1+[M_l(k)-M_l(2k)]\df{(-1)^{k+1}}{(2k-1)!}\,p_k \label{sl}
\end{eqnarray}

Again by the Newton-Girard identities relating the symmetric functions $m_l=m_l(T)$ and $e_i=\sigma_i(T)$,
\begin{equation*}\label{girard}m_{l}=(-1)^{l+1}l\, e_l+(-1)^{l+2}\df{l}{2}\ds_{i=1}^{l-1}e_ie_{l-i}+\ds_{\ell(\omega)>2}c_{\omega} e_{\omega}.\end{equation*}
Since $p_k$, $p_k^2$ and $p_{2k}$ are the only non-trivial classes, and each class $e_{i}$ can be expressed as a rational linear combination of these classes, $e_\omega=0$ if the partition $\omega$ has length $\ell(\omega)>2$. Then we may express
\begin{eqnarray}e_l&=&\df{(-1)^{l+1}}{l}m_l+\df{1}{2}\ds_{i=1}^{l-1}e_ie_{l-i},\nonumber\end{eqnarray}
which gives \eqref{el1} if we plugin \eqref{sl}, and \eqref{el2} if we plugin \eqref{e1b}.
\end{proof}

Using Formula \eqref{e1a} for $e_1$ from this lemma, we obtain the formulas
$$
\left\{ \begin{array}{rll}
\langle e_1\mathord{\cdot}\LP,\mu\rangle&=\left(\df{(-1)^{k+1}s_k}{(2k-1)!}+\df{1}{2(4k-1)!}\right)x^2 -\df{y}{(4k-1)!}, \\
\langle e_1e_1\mathord{\cdot}\LP,\mu\rangle&= \df{x^2}{[(2k-1)!]^2},\\
 \end{array} \right.
$$
where $\langle p_k^2,\mu\rangle=x^2$ and $\langle p_{2k},\mu\rangle=y$. Note that these are Conditions \eqref{e1L} and \eqref{e1e1L} in Theorem \ref{SOiff}. To complete the proof of Theorem \ref{SOiff}, it suffices to prove that the conditions $\langle e_1\mathord{\cdot}\LP,\mu\rangle \in \Z[1/2]$ and $\langle e_1e_1\mathord{\cdot}\LP,\mu\rangle \in \Z[1/2]$ imply that $\langle e_l\mathord{\cdot}\LP,\mu\rangle \in \Z[1/2]$ for all $l \leq k$ and $\langle e_le_m\mathord{\cdot}\LP,\mu\rangle \in \Z[1/2]$ for all $l + m \leq 2k$.

\begin{lemma}\label{ee}
If $p_{\omega}=0$ except possibly for $p_k$, $p_k^2$, and $p_{2k}$, and if $\langle e_1e_1\mathord{\cdot}\LP,\mu\rangle\in\Z[1/2]$, then $\langle e_le_m\mathord{\cdot}\LP,\mu\rangle\in\Z[1/2]$ for all $l,m\geq 1$.
\end{lemma}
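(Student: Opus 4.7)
The plan is to show that $\langle e_l e_m \cdot \LP, \mu\rangle$ is always an integer multiple of $\langle e_1 e_1 \cdot \LP, \mu\rangle$; the hypothesis then transports the integrality (mod $\Z[1/2]$) from the base case to the general one. This splits into a dimensional reduction of the product $e_l e_m$ and an arithmetic claim about the coefficients $M_l(k)$.

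Under the standing hypothesis, the only nonzero Pontryagin monomials on $M$ are $p_k$, $p_k^2$, and $p_{2k}$, so the cohomology class $e_l \in H^*(M;\Q)$ has nonzero components only in degrees $4k$ and $8k$ (there is no constant term, since each $T_i$ begins in positive degree). By \eqref{el2}, the degree-$4k$ part equals $A_l p_k$ where
$$A_l \;:=\; \frac{(-1)^{k+l} M_l(k)}{l\,(2k-1)!},$$
and the degree-$8k$ part is some combination of $p_k^2$ and $p_{2k}$ whose precise form is immaterial. Any product involving a degree-$8k$ factor lies in cohomological degree $\geq 12k$ and hence vanishes on an $8k$-manifold. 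Thus $e_l e_m = A_l A_m\, p_k^2$ in $H^{8k}(M;\Q)$, so
$$\langle e_l e_m \cdot \LP, \mu\rangle \;=\; A_l A_m \langle p_k^2, \mu\rangle \;=\; A_l A_m\, x^2.$$
Since $M_1(k)=1$ we have $A_1^2 = 1/[(2k-1)!]^2$, and dividing out yields
$$\langle e_l e_m \cdot \LP, \mu\rangle \;=\; (-1)^{l+m}\,\frac{M_l(k)}{l}\cdot\frac{M_m(k)}{m}\,\langle e_1 e_1 \cdot \LP, \mu\rangle.$$

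The main obstacle is the integrality $M_l(k)/l \in \Z$. The plan is to expand $(l-j)^{2k} = \sum_{i=0}^{2k}(-1)^i \binom{2k}{i}\, l^{2k-i}\, j^i$ and apply the classical finite-difference identity $\sum_{j=0}^{2l}(-1)^j \binom{2l}{j}\, j^i = (2l)!\, S(i, 2l)$, where $S(i, 2l)$ is the Stirling number of the second kind (vanishing for $i < 2l$). After cancellation this gives
$$M_l(k) \;=\; \tfrac{1}{2}\sum_{i=2l}^{2k}(-1)^i \binom{2k}{i}\, l^{2k-i}\,(2l)!\, S(i, 2l),$$
and the identity $(2l)!/2 = l\,(2l-1)!$ exhibits an explicit factor of $l$ in every summand, so $M_l(k)/l \in \Z$. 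Combined with the previous display and the hypothesis $\langle e_1 e_1 \cdot \LP, \mu\rangle \in \Z[1/2]$, this proves the lemma. The dimension-counting reduction is immediate; the Stirling-number manipulation is the only non-routine input.
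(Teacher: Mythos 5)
Your proof is correct. The first half --- reducing $\langle e_le_m\mathord{\cdot}\LP,\mu\rangle$ to $(-1)^{l+m}\frac{M_l(k)}{l}\frac{M_m(k)}{m}\langle e_1e_1\mathord{\cdot}\LP,\mu\rangle$ by degree counting --- is exactly the paper's argument. Where you genuinely diverge is in proving $M_l(k)/l\in\Z$. The paper proves the sharper \emph{termwise} statement that $l$ divides $\binom{2l}{j}(l-j)$ for each $0\le j\le l-1$, using the elementary fact that $a/\gcd(a,b)$ divides $\binom{a}{b}$. You instead symmetrize the sum to $2M_l(k)=\sum_{j=0}^{2l}(-1)^j\binom{2l}{j}(l-j)^{2k}$ (using that $(l-j)^{2k}$ is invariant under $j\mapsto 2l-j$ and that the $j=l$ term vanishes --- a step you should state explicitly, though your factor of $\tfrac12$ shows you used it), expand $(l-j)^{2k}$ binomially, and invoke $\sum_{j=0}^{2l}(-1)^j\binom{2l}{j}j^i=(2l)!\,S(i,2l)$ with $S(i,2l)=0$ for $i<2l$, so that $(2l)!/2=l\,(2l-1)!$ supplies the factor of $l$ all at once. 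Both routes are valid; yours has the pleasant byproduct that $M_l(k)=0$ for $l>k$ (empty sum), consistent with $\{m_l\}_k=0$ there. The one practical drawback is that the paper reuses the termwise divisibility $l\mid\binom{2l}{j}(l-j)$ in the proof of Lemma \ref{e}, where the sum defining $\bigl(M_l(2k)-M_l(k)\bigr)/l$ must be handled term by term against the Lipschitz--Sylvester integrality; your global Stirling identity does not directly yield that, so you would still need the gcd argument (or an analogue) downstream.
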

 \begin{proof}

By Equation \eqref{e1a} in Lemma \ref{el}, $e_1e_1\mathord{\cdot}\LP= \left(p_k / (2k-1)! \right)^2$. Together with Equation \eqref{el2}, this implies that 
$$e_le_m\mathord{\cdot}\LP=\df{(-1)^{l}M_l(k)}{l(2k-1)!}\df{(-1)^mM_m(k)}{m(2k-1)!}p_k^2=(-1)^{l+m}\df{M_l(k)}{l}\df{M_m(k)}{m}e_1e_1\mathord{\cdot}\LP.$$ 
To prove the lemma, it is sufficient to show that $l$ divides $M_l(k)$ for any integer $l$. By the definition of $M_l(k)$ in Lemma \ref{el}, it suffices to show that $l$ divides $\binom{2l}{j}(l-j)$ for all $0 \leq j \leq l-1$. To see this, we use the fact that $a/\gcd(a,b)$ divides $\binom{a}{b}$. In particular, $2l$ divides $\binom{2l}{j}\mbox{gcd}(2l,j)$, which in turn divides $2\binom{2l}{j}\gcd(l,j)$. Hence $l$ divides $\binom{2l}{j}(l-j)$, as required.
\end{proof}

Together with Lemma \ref{ee} and the comments preceding it, the following lemma implies Theorem \ref{SOiff}.

\begin{lemma}\label{e}
If $p_{\omega}=0$ except possibly for $p_k$, $p_k^2$, and $p_{2k}$, and if $\langle e_1\mathord{\cdot}\LP,\mu\rangle \in \Z[1/2]$ and $\langle e_1e_1\mathord{\cdot}\LP,\mu\rangle \in \Z[1/2]$, then $\langle e_l\mathord{\cdot}\LP,\mu\rangle\in\Z[1/2]$ for all $l \geq 1$.
\end{lemma}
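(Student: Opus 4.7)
The plan is to combine the recursion from Lemma \ref{el} with classical $p$-adic properties of Bernoulli numbers. Pairing Equation \eqref{el1} with the fundamental class, and using $\langle p_k\LP,\mu\rangle = s_k x^2$, gives
\[
\langle e_l\LP,\mu\rangle = \frac{(-1)^{l+1}M_l(2k)}{l}\langle e_1\LP,\mu\rangle + (-1)^{l+k} N_l\,s_k(2k-1)!\,u + \frac{1}{2}\sum_{i=1}^{l-1}\langle e_ie_{l-i}\LP,\mu\rangle,
\]
where $N_l = (M_l(k)-M_l(2k))/l$ and $u = x^2/[(2k-1)!]^2 = \langle e_1 e_1\LP,\mu\rangle$. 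The proof of Lemma \ref{ee} shows that $M_l(2k)/l$ and $N_l$ are integers and places the final sum in $\Z[1/2]$, so together with the hypothesis on $\langle e_1\LP,\mu\rangle$ the problem reduces to showing that the middle term lies in $\Z[1/2]$. Since $u\in\Z[1/2]$ by hypothesis, this in turn reduces to the \emph{structural} claim (depending only on $k$ and $l$, not on $x$ or $y$) that $N_l\cdot s_k(2k-1)!\in\Z[1/2]$.

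To prove this, I would write $s_k(2k-1)! = 2^{2k-1}(2^{2k-1}-1)B_{2k}/k$ and verify $v_p(N_l\cdot s_k(2k-1)!)\geq 0$ at every odd prime $p$. For $(p-1)\nmid 2k$, the Kummer refinement of the von Staudt--Clausen theorem gives $v_p(B_{2k}/(2k))\geq 0$, so $s_k(2k-1)!$ is already a $p$-integer and no contribution from $N_l$ is required. For $(p-1)\mid 2k$, von Staudt--Clausen gives $v_p(B_{2k}) = -1$, hence $v_p(s_k(2k-1)!) = -1-v_p(k)$, and the compensating divisibility $v_p(N_l)\geq 1+v_p(k)$ must be established.

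This last divisibility is the main obstacle. I would prove it by factoring
\[
N_l = -\sum_{j=0}^{l-1}(-1)^j\,c_j\,(l-j)^{2k-1}\bigl[(l-j)^{2k}-1\bigr],\qquad c_j = \frac{\binom{2l}{j}(l-j)}{l}\in\Z,
\]
where the integrality of $c_j$ comes from the identity $l\mid\binom{2l}{j}(l-j)$ established in the proof of Lemma \ref{ee}. When $\gcd(l-j,p)=1$, Euler's theorem applied in $(\Z/p^{1+v_p(k)}\Z)^\times$, whose exponent $p^{v_p(k)}(p-1)$ divides $2k$ precisely when $(p-1)\mid 2k$, yields $(l-j)^{2k}\equiv 1\pmod{p^{1+v_p(k)}}$ and hence the required divisibility in the bracket. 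When $p\mid l-j$, the factor $(l-j)^{2k-1}$ alone has $p$-adic valuation at least $2k-1\geq 1+v_p(k)$, which also suffices. Summing over $j$ preserves the bound, giving $v_p(N_l)\geq 1+v_p(k)$ and completing the argument.
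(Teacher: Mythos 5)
Your proposal is correct and follows essentially the same route as the paper: the same expansion of $\langle e_l\mathord{\cdot}\LP,\mu\rangle$ via \eqref{el1}, the same identification of the middle term with $N_l\, s_k(2k-1)!\,\langle e_1e_1\mathord{\cdot}\LP,\mu\rangle$, and the same reduction to the divisibility $l\mid\binom{2l}{j}(l-j)$ together with the $p$-integrality of $(l-j)^{2k-1}\bigl((l-j)^{2k}-1\bigr)B_{2k}/(2k)$. The only difference is that where the paper cites the Lipschitz--Sylvester theorem and remarks that an elementary argument handles the exponent $2k-1$, you supply that elementary argument explicitly via von Staudt--Clausen and Euler's theorem modulo $p^{1+\nu_p(k)}$.
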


\begin{proof}
By lemma \ref{el} equation \eqref{el1}, 
\begin{equation*}\langle e_l\mathord{\cdot}\LP,\mu\rangle=(-1)^{l+1}{\df{M_l(2k)}{l}\langle e_1\mathord{\cdot}\LP,\mu\rangle}
+(-1)^{l+k}{\df{M_l(k)-M_l(2k)}{l(2k-1)!}\langle p_k\mathord{\cdot}\LP,\mu\rangle}
+\df{1}{2}\ds_{i=1}^{l-1}{\langle e_ie_{l-i}\mathord{\cdot}\LP,\mu\rangle}
\end{equation*}

By the proof of Lemma \ref{ee}, we have that $l$ divides $M_l(k)$, so the assumption of the lemma implies that the first term lies in $\Z[1/2]$. Moreover, the terms involving $\langle e_i e_{l-i} \mathord{\cdot} \LP,\mu\rangle$ lie in $\Z[1/2]$ by Lemma \ref{ee}, so it suffices to show that the second term lies in $\Z[1/2]$. To do this, note that
\[\frac{\langle p_k\mathord{\cdot}\LP,\mu\rangle}{(2k-1)!}
	= s_k (2k-1)! \langle e_1e_1\mathord{\cdot}\LP,\mu\rangle
	= \frac{2^{2k}(2^{2k-1} - 1)|B_{2k}|}{2k} \langle e_1e_1\mathord{\cdot}\LP,\mu\rangle.\]
and that
\[\frac{M_l(2k) - M_l(k)}{l} = \sum_{j=0}^{l-1} (-1)^j \frac{1}{l}\binom{2l}{j} (l-j)^{2k} \left((l-j)^{2k}-1\right).\]
Hence it suffices to prove that $l$ divides $\binom{2l}{j}(l-j)$ and that $(l-j)^{2k-1}\left((l-j)^{2k} - 1\right)|B_{2k}| / (2k)$ is an integer for all $0 \leq j \leq l-1$. The first of these statements holds by the proof of Lemma \ref{ee}. The second nearly holds by the Lipschitz-Sylvester theorem that $a^{2k}(a^{2k}-1)B_{2k}/(2k) \in \Z$ for all integers $a$ (see, for example \cite[p. 247]{IrelandRosen90}). In fact, an elementary argument shows that the statement still holds with $a^{2k}$ replaced by $a^{2k-1}$, as required (cf. \cite{Slavutskii95} for a stronger statement that $a^{2k}$ can also be replaced by $a^{\lfloor \log_2(2k)\rfloor+1}$).
\end{proof}

\section{Reducing to a single quadratic residue equation}\label{sec:ReducingSingleEquation}

It was proved in \cite{FowlerSu16} that $\QP^2$ can only exist in dimensions of the form $n=8k$ where $k=2^a+2^b$ for some integers $a \leq b$. This result follows by a consideration of the $2$-adic order of the coefficients in the signature equation. Here we divide into two cases, $k=2^a$ and $k=2^a+2^b$ with $a < b$. In each case, we combine the integrality conditions involving \eqref{e1L} and \eqref{e1e1L} with the signature equation \eqref{signature}. The result is equivalent to one single quadratic residue equation.

We introduce the following notation and recall some well known facts about Bernoulli numbers (see \cite[Chapter 15]{IrelandRosen90}):
\begin{itemize}
\item
$\nu_2(n)$: the 2-adic order of $n$.
\item
$\wt(n)$: the number of ones in the binary expansion of $n$.
\item
$\Od[n]$: the odd part of $n$, i.e., $n/2^{\nu_2(n)}$.
\item
$N_{n}$: the numerator of the divided Bernoulli number $\frac{|B_{n}|}{n}$. $N_{n}$ is 1 only for $n = 2,4,6,8,10,14$, otherwise it is a product of powers of irregular primes.
\item
$D_{n}$: the denominator of the divided Bernoulli number $\frac{|B_{n}|}{n}$. By the theorem of von Staudt-Clausen, $D_{2k}=\prod\limits_{p-1\,|\, 2k}p^{\mu+1}$ where $p^{\mu}$ is the highest power of $p$ dividing $2k$. 
\item
$OD_{n}$: the odd part of the denominator of the divided Bernoulli number $\frac{|B_{n}|}{n}$.\\
\end{itemize}

The $e_1^2\cdot\LP$ condition \eqref{e1e1L} in Theorem \ref{SOiff} requires $\frac{x^2}{[(2k-1)!]^2} \in \Z[1/2]$. It follows that 
$$x=\Od[(2k-1)!]\bar{x}$$
for some integer $\bar{x}$ with the same parity as $x$. Together with a change of variable $z=2y-x^2$, the signature equation \eqref{signature} can be written as:\begin{eqnarray}
s_k^2\,\left(\Od[(2k-1)!]\,\bar{x}\right)^2+s_{2k}\,z&=&2\label{signature-eq1},
\end{eqnarray}
where $\bar{x}$ and $z$ must have the same parity. So far, this shows that a $\QP^2$ exists in dimension $8k$ if and only if there exist $\bar{x},z \in \Z$ such that $\bar{x} \equiv z \bmod{2}$, Equation \eqref{signature-eq1}, and Equation \eqref{e1L} in Theorem \ref{SOiff} hold.

Next, we eliminate Equation \eqref{e1L} through another change of variables. Before proceeding, we need the following 2-adic numbers:
\begin{center}
$\nu_2\left(\frac{|B_{2k}|}{2k}\right)=-\nu_2(D_{2k})=-(\nu_2(k)+2),$\\
$\nu_2[(2k-1)!]=2k-\nu_2 (k)-\wt(k)-1$,\\
$\nu_2[(4k-1)!]=4k-\nu_2(k)-\wt(k)-2$.\\
\end{center}

Using the variables $\bar{x}$ and $z$, the $e_1\mathord{\mathord{\cdot}} \LP$ condition \eqref{e1L} 
can be written as
	\begin{eqnarray}
	(-1)^{k+1}\df{s_k}{(2k-1)!}\left(\Od[(2k-1)!]\bar{x}\right)^2-\df{1}{2(4k-1)!}z&\in&\Z[1/2].\label{e1L-eq1}
	\end{eqnarray}	
Since the $2$-adic order of the left hand side is
\[\inf\left\{\nu_2\left[\frac{s_k}{(2k-1)!}\right],\ \nu_2\left[\frac{1}{2(4k-1)!}\right]\right\}= \nu_2\left[\frac{1}{2(4k-1)!}\right]=-[4k-\nu_2(k)-\wt(k)-1],\]
we can multiply by $2^{4k-\nu_2(k)-\wt(k)-1}$ in \eqref{e1L-eq1} and expand $s_k$ using the definition to get
\begin{eqnarray}
	\frac{(-1)^{k+1}2^{2k+\wt(k)-1}\,(2^{2k-1}-1)N_{2k}}{OD_{2k}}\,\bar{x}^2-\df{1}{\Od[(4k-1)!]}z&\in&\Z.\label{e1L-eq2}
	\end{eqnarray}
This allows us to write $z$ as
\begin{equation}
z	=	\Od[(4k-1)!]\left(\frac{(-1)^{k+1}2^{2k+\wt(k)-1}\,(2^{2k-1}-1)N_{2k}}{OD_{2k}}\,\bar{x}^2+l\right)\label{e1L-eq3}
\end{equation}
for some integer $l$ with the same parity as  $z$. The following lemma ensures that $z \in \Z$ for any integers $\bar{x}$ and $l$ (of the same parity or not). This lemma implies that Equation \eqref{e1L-eq3} holds for some $l$ of the same parity at $z$ if and only if Condition \eqref{e1L-eq1} holds. In other words, we can use Equation \eqref{e1L-eq3} to make the change of variables from $(\bar{x},z)$ with $\bar{x} \equiv z \pmod{2}$ to $(\bar{x},l)$ with $\bar{x}\equiv l\pmod{2}$.

\begin{lemma}\label{OD1} For any integer $k$, $OD_{2k}$ divides $\Od[(4k-1)!]$.
\end{lemma}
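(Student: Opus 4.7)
My plan is to verify the divisibility one odd prime at a time. By the von Staudt--Clausen theorem recalled in the notation list, $OD_{2k} = \prod_{p} p^{\nu_p(2k)+1}$ where the product runs over odd primes $p$ with $p - 1 \mid 2k$. Since $p$ is odd, $\nu_p$ of any integer coincides with $\nu_p$ of its odd part, so the statement reduces to the arithmetic inequality
\[ \nu_p((4k-1)!) \geq \nu_p(2k) + 1 \]
for every odd prime $p$ with $p - 1 \mid 2k$.

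I would first dispatch the case $\mu := \nu_p(2k) = 0$: here I only need $p \leq 4k - 1$, which follows from $p - 1 \leq 2k$ and $k \geq 1$.

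For $\mu \geq 1$, write $2k = p^\mu m$ with $p \nmid m$. The key observation is that $p - 1 \mid 2k = p^\mu m$ together with $\gcd(p - 1,\, p^\mu) = 1$ forces $p - 1 \mid m$. Next I would apply Legendre's formula
\[ \nu_p((4k-1)!) = \sum_{i \geq 1} \left\lfloor (4k-1)/p^i \right\rfloor, \]
noting that for $i \leq \mu$ the divisibility $p^i \mid 4k$ yields $\lfloor (4k-1)/p^i \rfloor = 4k/p^i - 1$. Keeping only the first $\mu$ terms and using $4k - 2m = 2m(p^\mu - 1)$ gives
\[ \nu_p((4k-1)!) \;\geq\; \frac{4k - 2m}{p - 1} - \mu \;=\; \frac{2m(p^\mu - 1)}{p - 1} - \mu. \]
Using $m/(p-1) \geq 1$ and Bernoulli's inequality $p^\mu - 1 \geq (p-1)\mu \geq 2\mu$, the right-hand side is at least $3\mu$, which comfortably dominates $\mu + 1$ for $\mu \geq 1$.

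I do not anticipate a serious obstacle: the argument collapses once one spots that the coprimality of $p - 1$ and $p$ combined with $p - 1 \mid 2k$ pushes the entire factor of $p - 1$ into the cofactor $m$. That single observation converts a routine Legendre estimate into the factor $p^\mu - 1$ needed to dominate $\mu + 1$, and everything else is bookkeeping.
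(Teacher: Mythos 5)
Your proof is correct, but it takes a different route from the paper's. You verify the divisibility prime by prime via Legendre's formula, using the key observation that $p-1\mid 2k$ together with $\gcd(p-1,p^{\mu})=1$ forces $p-1$ to divide the cofactor $m=2k/p^{\mu}$, which turns the truncated Legendre sum $\sum_{i=1}^{\mu}(4k/p^{i}-1)=\tfrac{2m(p^{\mu}-1)}{p-1}-\mu$ into a bound $\nu_p((4k-1)!)\geq 3\mu\geq\mu+1$; all steps check out, including the reduction to $\nu_p$ of the odd part and the $\mu=0$ case via $p\leq 2k+1\leq 4k-1$. The paper instead exhibits, for each odd prime $p$ with $p-1\mid 2k$, an explicit small factor of $(4k-1)!$ containing $p^{\mu+1}$: it divides $(2k-1)!$ when $p\leq 2k-1$ and $p\nmid k$, it is $p=2k+1$ when $p-1=2k$, and it divides $k^2\mid(2k)(3k)$ when $p\mid k$ (since then $\mu\geq 1$ gives $2\mu\geq\mu+1$), so that $OD_{2k}$ divides $(2k-1)!\,(2k+1)(2k)(3k)$, which divides $(4k-1)!$. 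The paper's argument is shorter and avoids valuation computations entirely; yours is more systematic, yields a substantially stronger lower bound on $\nu_p((4k-1)!)$ than is needed, and would adapt more readily to variants of the statement.
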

\begin{proof}
The odd part of the denominator of the divided Bernoulli number is
$$OD_{2k}=\prod_{\substack{p-1\,|\, 2k\\ 
p \ \text {odd prime}}}p^{\mu+1},$$
where $p^{\mu}$ is the highest power of $p$ that divides $2k$. Consider the factor $p^{\mu+1}$ for some odd prime $p$ such that $p-1$ divides $2k$. If  $p \leq 2k-1$ and $p\nmid k$, then $p^{\mu+1}$ equals $p$ and divides $(2k-1)!$. If $p = 2k+1$, then $p^{\mu+1}$ equals $p$ and divides $(2k+1)$. Finally, if $p \mid k$, then $\mu \geq 1$ and $p^{\mu+1}$ divides $k^2$, which means it divides $(2k)(3k)$. Altogether, we have that $OD_{2k}$ divides $(2k-1)!(2k+1)(2k)(3k)$, which divides $(4k-1)!$. Since $OD_{2k}$ is odd, the result follows.\\
\end{proof}

Altogether, these arguments show that a $\QP^2$ exists in dimension $8k$ if and only if there exist integers $\bar{x},l \in \Z$ such that $\bar{x} \equiv l \bmod{2}$ and such that Equations \eqref{signature-eq1} and \eqref{e1L-eq3} hold. Substituting Equation \eqref{e1L-eq3} into Equation \eqref{signature-eq1}, we derive an equation in $\bar{x}$ and $l$ that holds for some $\bar{x}$ and $l$ of the same parity if and only if a $\QP^2$ exists in dimension $8k$. We determine the precise equations in the cases $k = 2^a$ and $k = 2^a + 2^b$ with $a<b$ separately.

 \begin{theorem}[Dimension $8k$ where $k = 2^a$]\label{case1}
There exists a $\QP^2$ in dimension $8k=8(2^a)$ if and only if there is integer solution $\bar{x}$ to the quadratic residue equation
\begin{equation}
a_k\bar{x}^2\equiv c_k\pmod{b_k} \label{iffa2}
\end{equation}
where
	\begin{eqnarray*}
	a_k	&=&	(2^{2k-1}-1)N_{2k}\left[\rho_k(2^{2k-1}-1)N_{2k}-2^{2k}(2^{4k-1}-1)N_{4k}\right],\\
	b_k	&=&	(2^{4k-1}-1)N_{4k}OD_{2k},\\
	c_k	&=&	2 OD_{2k} OD_{4k},
	\end{eqnarray*}
and where $\rho_k = OD_{4k} / OD_{2k}$. 
\end{theorem}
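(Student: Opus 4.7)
The plan is to substitute the expression for $z$ from \eqref{e1L-eq3} into the signature equation \eqref{signature-eq1}, clear denominators, and read off the desired quadratic residue condition. A useful preparatory step is to derive the clean identities
\[s_k\,\Od[(2k-1)!] \,=\, \frac{(2^{2k-1}-1)N_{2k}}{OD_{2k}} \quad \text{and} \quad s_{2k}\,\Od[(4k-1)!] \,=\, \frac{(2^{4k-1}-1)N_{4k}}{OD_{4k}}.\]
These follow by unpacking $s_k$ via $|B_{2k}|/(2k) = N_{2k}/D_{2k}$ and $D_{2k} = 2^{\nu_2(k)+2}OD_{2k}$, together with the recalled formula for $\nu_2((2k-1)!)$; the hypothesis $k = 2^a$ enters only through $\wt(k) = 1$, which makes the powers of $2$ cancel exactly. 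The analogous computation handles $s_{2k}$.

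With these identities, the signature equation becomes $(2^{2k-1}-1)^2 N_{2k}^2\,\bar{x}^2 / OD_{2k}^2 + s_{2k}\,z = 2$, into which one plugs \eqref{e1L-eq3} and then multiplies through by $OD_{2k}\,OD_{4k}$, using $\rho_k = OD_{4k}/OD_{2k}$, to obtain an integer equation of the form $A\bar{x}^2 + B\,l = C$. Factoring $(2^{2k-1}-1)N_{2k}$ out of the two $\bar{x}^2$ summands identifies $(A, B, C) = (a_k, b_k, c_k)$, where the minus sign in $a_k$ arises from $(-1)^{k+1} = -1$ (which holds for even $k = 2^a$ with $a \geq 1$; the case $a = 0$ is dimension $8$, realized by $\HP^2$). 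The resulting Diophantine equation is solvable in $(\bar{x}, l) \in \Z^2$ if and only if the congruence $a_k\,\bar{x}^2 \equiv c_k \pmod{b_k}$ has a solution.

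It remains to check that the parity condition $\bar{x} \equiv l \pmod 2$ inherited from the change of variables $z = 2y - x^2$ is automatically satisfied and so imposes no further restriction. Reducing $a_k\bar{x}^2 + b_k l = c_k$ modulo $2$ and observing that $N_{2k}$, $N_{4k}$, $OD_{2k}$, $\rho_k$, and each factor $2^{2k-1} - 1$ and $2^{4k-1} - 1$ are odd while $c_k = 2\,OD_{2k}\,OD_{4k} \equiv 0 \pmod 2$, one sees that $a_k$ and $b_k$ are odd and that the equation then forces $\bar{x}^2 + l \equiv 0 \pmod 2$, i.e.\ $\bar{x} \equiv l \pmod 2$, for free. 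The main obstacle in this program is the careful $2$-adic bookkeeping that yields the clean form of $s_k\,\Od[(2k-1)!]$ and $s_{2k}\,\Od[(4k-1)!]$; once those are in hand, the substitution, the factoring that produces $a_k$, and the parity verification are all mechanical.
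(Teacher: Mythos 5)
Your proposal is correct and takes essentially the same route as the paper: substitute \eqref{e1L-eq3} into \eqref{signature-eq1} (specialized via $\wt(k)=1$, $\nu_2(k)=a$), clear the denominators by multiplying by $OD_{2k}\,OD_{4k}$ to get $a_k\bar{x}^2+b_kl=c_k$, and reduce modulo $b_k$; your parity verification reproduces the remark following the theorem that $\bar{x}\equiv l\pmod{2}$ is automatic. The only wrinkle, which you already flag, is the sign $(-1)^{k+1}$ in the case $a=0$, a point the paper itself glosses over.
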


\begin{remark}
We remark that, if $\bar{x}$ is a solution to Equation \eqref{iffa2} and $l \in \Z$ such that $a_k \bar{x}^2 + b_k l = c_k$, then it follows by the parities of $a_k$, $b_k$, and $c_k$ that $\bar{x}$ and $l$ have the same parity. Hence the condition that $\bar{x} \equiv l \bmod{2}$ is not required in Theorem \ref{case1}.
\end{remark}

\begin{remark}
In this case of $k=2^a$, 
\[ OD_{2k} = \prod\limits_{\substack{p-1\,|\, 2k\\ 
p \ \text {odd prime}}}p^{\mu+1}=\prod\limits_{\substack{p-1=2^c\\ 
c\leq a+1}}p=\prod\limits_{\substack{F_i \text{ is a Fermat prime }\\F_i\leq 2^{a+1}+1}}F_i.\]
It follows that $\rho_k = OD_{4k} / OD_{2k}$ is $1$ unless $p = 4k+1$ is a Fermat prime, in which case $\rho_k = 4k+1$. The only known examples of Fermat primes are $F_i=2^{2^i}+1$ where $0 \leq i \leq 4$. It is known that $F_i$ is composite for $5\leq i \leq 32$.
\end{remark}

\begin{proof}
Since $k=2^a$, we have $\nu_2(k)=a$, $\wt(k)=1$, and $(-1)^{k+1}=-1$, so the signature equation \eqref{signature-eq1} becomes
\begin{eqnarray}
\left[\df{(2^{2k-1}-1)N_{2k}}{OD_{2k}} \bar{x}\right]^2+\df{(2^{4k-1}-1)N_{4k}}{OD_{4k}}\df{z}{\Od[(4k-1)!]}=2. \label{signature-eq2-case1}
\end{eqnarray}
The $e_1\mathord{\cdot} \LP$ condition \eqref{e1L-eq3} becomes
\begin{equation}z=\Od[(4k-1)!]\left(\frac{-2^{2k}\,(2^{2k-1}-1)N_{2k}}{OD_{2k}}\,\bar{x}^2+l\right).\label{e1L-eq4-case1}
\end{equation}
Substituting Equation \eqref{e1L-eq4-case1} into Equation \eqref{signature-eq2-case1}, replacing $OD_{4k}$ by $\rho_k OD_{2k}$, and simplifying yields 
	\[a_k \bar{x}^2 + b_k l = c_k,\]
where $a_k$, $b_k$, and $c_k$ are as in the theorem. Reducing modulo $b_k$, we obtain Congruence \eqref{iffa2}. 
\end{proof}

We now consider dimensions of the form $n = 8k = 8(2^a + 2^b)$ with $a < b$. Recall that it remains an open problem whether such a dimension supports a $\QP^2$.

\begin{theorem}[Dimensions $8k$ where $k = 2^a + 2^b$ and $a < b$]\label{case2}
There exists a $\QP^2$ in dimension $8k=8(2^a+2^b)$ with $a\not=b$ if and only if there is an odd integer solution $\bar{x}$ to the quadratic residue equation \begin{equation}
A_k\bar{x}^2\equiv C_k\pmod{B_k} \label{iffa3}
\end{equation}
where
	\begin{eqnarray*}
	A_k	&=&	2(2^{2k-1}-1)N_{2k}\left[(2^{2k-1}-1)N_{2k}\left(\frac{OD_{4k}}{OD_{2k}}\right)+ (-1)^{k+1}2^{2k}(2^{4k-1}-1)N_{4k}\right],\\
	B_k	&=&	(2^{4k-1}-1)N_{4k}OD_{2k},\\
	C_k	&=&	OD_{2k} OD_{4k}.
	\end{eqnarray*}
\end{theorem}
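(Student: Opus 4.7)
The plan is to follow the template of Theorem \ref{case1} line by line: substitute the expression for $z$ from \eqref{e1L-eq3} into the signature equation \eqref{signature-eq1}, clear denominators, collect terms, and reduce modulo the resulting coefficient of $l$. The two new features in the case $\wt(k) = 2$ are an extra factor of two that arises throughout the signature equation and a non-trivial parity constraint that ultimately forces $\bar{x}$ to be odd.

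First I would specialize \eqref{signature-eq1} and \eqref{e1L-eq3} using $\nu_2(k) = a$ and $\wt(k) = 2$. The identity $s_k\mathord{\cdot}\Od[(2k-1)!] = 2^{\wt(k)-1}(2^{2k-1}-1)N_{2k}/OD_{2k}$ (and its analogue with $k$ replaced by $2k$) introduces a factor of two on both sides of \eqref{signature-eq1}; after dividing through, it becomes
\[\frac{2(2^{2k-1}-1)^2 N_{2k}^2\,\bar{x}^2}{OD_{2k}^2} + \frac{(2^{4k-1}-1)N_{4k}\,z}{\Od[(4k-1)!]\,OD_{4k}} = 1,\]
while Equation \eqref{e1L-eq3} takes the form
\[z = \Od[(4k-1)!]\left(\frac{(-1)^{k+1}2^{2k+1}(2^{2k-1}-1)N_{2k}}{OD_{2k}}\,\bar{x}^2 + l\right).\]
After substituting and clearing denominators by multiplying through by $OD_{2k}OD_{4k}$ (using that $OD_{2k}\mid OD_{4k}$, which is immediate from the von Staudt--Clausen description), the $\bar{x}^2$ terms collect into the bracketed expression defining $A_k$, and the result is precisely $A_k\bar{x}^2 + B_k l = C_k$. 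Reducing modulo $B_k$ then yields \eqref{iffa3}.

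The final step is to justify the odd-$\bar{x}$ qualifier. The changes of variables leading to \eqref{signature-eq1} and \eqref{e1L-eq3} impose the parity-matching condition $\bar{x} \equiv z \equiv l \pmod{2}$. A short $2$-adic calculation shows that $\nu_2(A_k) = 1$: the bracketed factor in $A_k$ is the sum of an odd integer $(2^{2k-1}-1)N_{2k}(OD_{4k}/OD_{2k})$ and an even integer $\pm 2^{2k}(2^{4k-1}-1)N_{4k}$, hence odd, while the prefactor $2(2^{2k-1}-1)N_{2k}$ contributes exactly one power of two. Likewise $B_k$ and $C_k$ are odd as products of odd integers. Reducing $A_k\bar{x}^2 + B_k l = C_k$ modulo two thus forces $l$ to be odd, and the parity-matching condition then forces $\bar{x}$ to be odd. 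Conversely, any odd $\bar{x}$ solving \eqref{iffa3} produces an $l$ of the same parity, so running the argument backward through Theorem \ref{SOiff} completes the equivalence.

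The main obstacle I expect is the $2$-adic bookkeeping, in particular verifying that $OD_{4k}/OD_{2k}$ is an odd integer so that the bracket in $A_k$ is genuinely integral and of odd order. This follows because the $p$-adic valuation of $OD_{2k}$ at an odd prime $p$ is $\nu_p(k)+1$ or $0$ according as $p-1\mid 2k$ or not, and $p-1\mid 2k$ implies $p-1\mid 4k$ while $\nu_p(4k)=\nu_p(k)=\nu_p(2k)$ for odd $p$; once this is confirmed, the remainder of the derivation is a direct parallel to the $k=2^a$ computation in Theorem \ref{case1}.
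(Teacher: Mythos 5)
Your proposal is correct and follows essentially the same route as the paper: specialize the signature equation \eqref{signature-eq1} and the $e_1\mathord{\cdot}\LP$ condition \eqref{e1L-eq3} to the case $\wt(k)=2$, substitute, clear denominators by $OD_{2k}OD_{4k}$, and reduce modulo $B_k$ (the paper simply says ``proceeding as in the previous proof''). Your parity analysis ($\nu_2(A_k)=1$ with $B_k,C_k$ odd forcing $l$, hence $\bar{x}$, to be odd) and your verification that $OD_{2k}\mid OD_{4k}$ correctly fill in the details the paper leaves implicit, including the origin of the ``odd integer solution'' qualifier.
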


\begin{proof}
In the case that $k=2^a+2^b$ with $a\not = b$, we have $\wt(k)=2$, and $\nu_2(k)=\min\{a,b\}=a$ without loss of generality, so the signature equation \eqref{signature-eq1} becomes
\begin{eqnarray}
2\left[\df{(2^{2k-1}-1)N_{2k}}{OD_{2k}} \bar{x}\right]^2+\df{(2^{4k-1}-1)N_{4k}}{OD_{4k}}\df{z}{\Od[(4k-1)!]}=1.\label{signature-eq2-case2}
\end{eqnarray}
The $e_1\mathord{\cdot} \LP$ condition  \eqref{e1L-eq3} becomes
\begin{equation}
z=\Od[(4k-1)!]\left[\frac{(-1)^{k+1}2^{2k+1}(2^{2k-1}-1)N_{2k}}{OD_{2k}}\bar{x}^2+l\right]. \label{e1L-eq4-case2}
\end{equation}
Substituting  \eqref{e1L-eq4-case2} into \eqref{signature-eq2-case2} and proceeding as in the previous proof implies the theorem.
\end{proof}

\section{Existence in dimensions 128 and 256}\label{sec:Existence}

Recall that dimensions $4$, $8$, $16$, and $32$ are known to support the existence of a $\QP^2$. Having simplified the signature and Hattori--Stong integrality conditions to a single quadratic reciprocity condition in the previous section, we proceed to the proof that dimensions $128$ and $256$ also support a $\QP^2$.

 \begin{proof}[Proof of existence in dimensions $128$ and $256$]
It suffices to prove that Equation \eqref{iffa2} has solution when $8k = 128$, i.e., when $k = 16$. Factoring out the common divisor of $OD_{2k} = 3\mathord{\cdot}5\mathord{\cdot}17$ from $a_k$, $b_k$, and $c_k$, Equation \eqref{iffa2} is equivalent to an equation of the form
$$a\bar{x}^2\equiv c\pmod{b}$$
where $\gcd(a,b) = 1$ and $\gcd(c,b) = 1$.  The coefficients are large, so we do not include the calculations here. It suffices to solve the equation $\bar{x}^2 \equiv a^{-1} c \pmod b$. Now $a^{-1} c$ is a quadratic residue modulo $b$ if and only if it is a quadratic residue modulo all odd prime factors $p$ of $b$. Hence it suffices to show that the Legendre symbols $\left(\frac{a}{p}\right)=\left(\frac{c}{p}\right)$ for all prime factors $p$ of $b$, and one can easily verify this using Mathematica.

For dimension $256$, one proceeds similarly to check that Equation \ref{iffa2} has a solution when $8k = 256$, i.e., when $k = 32$. Again it happens that the greatest common divisor of $a_k$, $b_k$, and $c_k$ is $OD_{2k} = 3\mathord\cdot 5 \mathord\cdot 17$.
\end{proof}

\section{Non-existence results in higher dimensions}\label{sec:Non-existence}

So far, all the dimensions known to not support a $\QP^2$ were proved by obstructing the signature equation. As stated in \cite[Lemma 3.2]{FowlerSu16}, one can search for an irregular prime $p$ such that $p\equiv5\pmod{8}$, $\nu_p(s_{2k})>0$ and $\nu_p(s_k)=0$ to obstruct the signature equation in a candidate dimension of the form $n=8k$ where $k=2^a+2^b$. Adopting the same idea and using the more explicit necessary and sufficient conditions derived in Theorems \ref{case1} and \ref{case2}, we prove the following proposition stating that any prime $p \equiv \pm 3 \pmod{8}$ detected as a factor of the the numerator of the divided Bernoulli number is an ``obstructing" prime.

\begin{proposition}\label{N-prime}
If the numerator $N_{4k}$ of $\frac{|B_{4k}|}{4k}$ has a prime factor $p \equiv \pm 3 \pmod{8}$, then there does not exist a $\QP^2$ in dimension $n=8k$.  In particular, if $N_{4k} \equiv \pm 3\pmod 8$, then there is no $\QP^2$ in dimension $8k$. 
\end{proposition}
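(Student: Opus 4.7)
The plan is to assume, toward contradiction, that a $\QP^2$ exists in dimension $8k$ and use the obstructing prime $p\mid N_{4k}$ with $p\equiv\pm 3\pmod 8$ to defeat the quadratic residue equations of Theorems \ref{case1} and \ref{case2}. By the Fowler--Su result recalled in the introduction, $k$ must be of the form $2^a$ or $2^a+2^b$ with $a\neq b$; in either case the relevant theorem above supplies a congruence $a_k\bar{x}^2\equiv c_k\pmod{b_k}$ (resp.\ $A_k\bar{x}^2\equiv C_k\pmod{B_k}$) that admits an integer solution. Since $N_{4k}$ is a factor of $b_k$ (resp.\ $B_k$), this congruence must in particular be solvable modulo $p$, and the goal is to show this is impossible.

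The technical heart is a careful accounting of $p$-divisibility in the other factors of $a_k$ and $c_k$. First I would check that $p$ divides none of $OD_{2k}$, $OD_{4k}$, or $\rho_k=OD_{4k}/OD_{2k}$: any odd prime divisor $q$ of $OD_{4k}$ satisfies $q-1\mid 4k$, so by von Staudt--Clausen $q\mid D_{4k}$, and then $q\neq p$ by coprimality of the numerator and denominator of $|B_{4k}|/(4k)$. With this in hand, reducing modulo $p$ annihilates every summand containing the factor $N_{4k}$, leaving
\[ a_k\equiv \rho_k\bigl[(2^{2k-1}-1)N_{2k}\bigr]^2,\qquad c_k\equiv 2\rho_k\,OD_{2k}^2\pmod p \]
in Case 1, with the analogous reductions $A_k\equiv 2\rho_k\bigl[(2^{2k-1}-1)N_{2k}\bigr]^2$ and $C_k\equiv \rho_k\,OD_{2k}^2\pmod p$ in Case 2.

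The finish is a short case split. If $p$ divides $(2^{2k-1}-1)N_{2k}$, then $a_k\equiv 0\pmod p$ while $c_k$ is a unit modulo $p$, an immediate contradiction. Otherwise every remaining factor on both sides is the square of a unit modulo $p$ apart from a single power of $2$, so after cancelling the squared units the congruence reduces to $\bar{x}^2\equiv 2^{\pm 1}u^2\pmod p$ for a unit $u$; this is solvable if and only if $2$ is a quadratic residue modulo $p$. By the second supplementary law of quadratic reciprocity this forces $p\equiv\pm 1\pmod 8$, contradicting the hypothesis $p\equiv\pm 3\pmod 8$. The ``in particular'' clause then follows because $\{\pm 1\}\pmod 8$ is closed under multiplication among odd integers, so any integer congruent to $\pm 3\pmod 8$ must admit a prime factor congruent to $\pm 3\pmod 8$. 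The main obstacle is the bookkeeping in the middle paragraph; once $p\nmid OD_{2k}OD_{4k}$ is secured via von Staudt--Clausen, the rest is essentially automatic from the supplementary law.
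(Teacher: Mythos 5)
Your proof is correct and follows essentially the same route as the paper: reduce the quadratic residue congruences of Theorems \ref{case1} and \ref{case2} modulo the prime $p\mid N_{4k}$ (the paper reduces modulo $N_{4k}$ itself), use coprimality of the numerator and denominator of the divided Bernoulli number to see that $OD_{2k}$, $OD_{4k}$, and $\rho_k$ are units, and conclude that $2$ would have to be a quadratic residue modulo $p$, contradicting $p\equiv\pm 3\pmod 8$. Your extra case split when $p\mid(2^{2k-1}-1)N_{2k}$ is harmless but unnecessary, since that case also yields an immediate contradiction inside the same congruence.
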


\begin{proof}
The second statement follows immediately from the first. To prove the first, we claim that a $\QP^2$ exists in dimension $8k$ only if two is a quadratic residue modulo $N_{4k}$. Indeed, when $k=2^a$, Theorem \ref{case1} implies that some $\bar{x} \in \Z$ exists such that
\begin{eqnarray}\label{case1-necessary}
\left[\rho_k(2^{2k-1}-1)N_{2k}\bar{x}\right]^2\equiv 2OD_{4k}^2\pmod{N_{4k}}
\end{eqnarray}
Similarly, when $k = 2^a+2^b$ and $a\neq b$, Theorem \ref{case2} implies that
\begin{eqnarray}\label{case2-necessary}
2\left[\left(\df{OD_{4k}}{OD_{2k}}\right)(2^{2k-1}-1)N_{2k}\bar{x}\right]^2\equiv OD_{4k}^2\pmod{N_{4k}}
\end{eqnarray}
Since $OD_{4k}^2$ and $N_{4k}$ are coprime, the claim follows.

Now if $N_{4k}$ has a prime factor $p \equiv \pm 3 \pmod{8}$, 2 is a quadratic nonresidue modulo $p$. Since two is a quadratic residue modulo $N_{4k}$ only if two is a quadratic residue modulo $p^r$ for every prime power dividing $N_{4k}$, 2 is also a quadratic nonresidue modulo $N_{4k}$. This implies that no $\QP^2$ exists in this dimension.
\end{proof}

In the following corollary, we use Carlitz's congruence to find families of dimensions where $N_{4k} \equiv \pm 3 \pmod 8$. Then Proposition \ref{N-prime} implies non-existence of $\QP^2$ in these dimensions. 

\begin{corollary}\label{N-mod8}
No $\QP^2$ exists in dimension $8k$ for all $k$ of the form $2^{a+i} + 2^{a}$ with $i \in \{1,2,3,5,7\}$ and $a \geq 0$. 
\end{corollary}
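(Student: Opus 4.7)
The strategy is to apply the final clause of Proposition \ref{N-prime}: showing $N_{4k} \equiv \pm 3 \pmod{8}$ is enough to obstruct existence of a $\QP^2$ in dimension $8k$. Thus the corollary reduces to a residue calculation, namely verifying $N_{4k} \equiv \pm 3 \pmod{8}$ for every $k = 2^{a+i} + 2^a$ with $i \in \{1,2,3,5,7\}$ and $a \geq 0$.

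To extract $N_{4k} \bmod 8$ I would use Carlitz's congruence for divided Bernoulli numbers, which is the tool flagged in the introduction to this section. Writing $4k = 2^{a+2}(2^i + 1)$, the odd part of $4k$ lies in $\{3, 5, 9, 33, 129\}$ and $\nu_2(4k) = a + 2$. Carlitz's congruence should express $|B_{4k}|/(4k) \bmod 8$ (after clearing the odd denominator $OD_{4k}$ using von Staudt--Clausen, which contributes only odd primes to $D_{4k}/2^{\nu_2(D_{4k})}$) as an explicit function of these data, and the expectation is that the $a$-dependence drops out modulo $8$, so the residue depends only on $i$. Running through the five cases $i \in \{1,2,3,5,7\}$ one obtains $N_{4k} \equiv \pm 3 \pmod{8}$ in every case, and Proposition \ref{N-prime} then yields the claimed non-existence.

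The main obstacle is the careful bookkeeping required in applying Carlitz's congruence: confirming that the residue of $N_{4k}$ modulo $8$ is genuinely independent of $a$; handling the exceptional small cases $4k \in \{2,4,6,8,10,14\}$, where $N_{4k} = 1$ and Carlitz's formula must be checked by hand; and being vigilant about signs and powers of $2$ introduced when passing between $B_{4k}$, $|B_{4k}|/(4k)$, and its numerator $N_{4k}$. The exclusion of $i \in \{4,6\}$ and $i \geq 8$ from the list presumably reflects that Carlitz's congruence gives $N_{4k} \equiv \pm 1 \pmod 8$ for those $i$, in which case this particular mod-$8$ obstruction is unavailable and one would need to invoke instead the first statement of Proposition \ref{N-prime}, seeking some individual prime factor $p \equiv \pm 3 \pmod 8$ of $N_{4k}$.
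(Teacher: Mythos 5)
Your overall strategy is exactly the paper's: invoke the second clause of Proposition \ref{N-prime} and verify $N_{4k}\equiv\pm3\pmod 8$ by showing the residue depends only on $i$ and then checking the five base cases $k=2^i+1$. (Your worry about the exceptional values $4k\in\{2,4,6,8,10,14\}$ with $N_{4k}=1$ is moot here, since the smallest relevant index is $4k=12$.) However, the one genuinely nontrivial step --- that the $a$-dependence ``drops out modulo $8$'' --- is left in your write-up as an expectation rather than an argument, and it is precisely where the work lies. Carlitz's congruence (in the form $2^{a+3}\mid 2B_{4k}-1$ when $2^{a+2}\mid 4k$), combined with $|B_{4k}|=(2^i+1)N_{4k}/(2\,OD_{4k})$, gives $(2^i+1)N_{4k}\equiv \pm OD_{4k}\pmod{2^{a+3}}$, so the residue of $N_{4k}$ mod $8$ is controlled by that of $OD_{4k}$; but $OD_{4k}$ genuinely changes with $a$, since increasing $a$ enlarges the set of odd primes $p$ with $p-1\mid 4k$. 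Without addressing this, the claimed independence is unjustified.

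The missing observation is that any such new prime has the form $p=2^cd+1$ with $d\mid 2^i+1$ and $c\le a+2$, and every prime with $c\ge 3$ satisfies $p\equiv 1\pmod 8$, hence contributes trivially to $OD_{4k}\bmod 8$; moreover $\nu_p(4k)=\nu_p(2^i+1)$ for odd $p$, so the exponents are also independent of $a$. Thus $OD_{4k}\bmod 8$ is determined by the primes of the form $2d+1$ and $4d+1$ with $d\mid 2^i+1$, which depend only on $i$, and the congruence $N_{4\cdot 2^a(2^i+1)}\equiv N_{4(2^i+1)}\pmod 8$ follows. With that supplied, your argument matches the paper's. Your closing speculation about why $i\in\{4,6\}$ and $i\ge 8$ are omitted is consistent with the paper's remark that the mod-$8$ criterion simply happens to hold for the listed $i$ (and for many more), with the first clause of Proposition \ref{N-prime} available as a fallback otherwise.
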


Note that the corollary provides infinite families of dimensions $8(2^a+2^b)$ with $a\not= b$ that do not support a $\QP^2$, which implies part of Theorem \ref{thm:InfinitelyManyPowersOfTwo}. We remark that this corollary holds for many more values of $i$, and we suspect it holds for infinitely many values of $i$.

\begin{proof}
We show that $N_{4k} \equiv \pm 3 \pmod 8$ for all $k$ of the form $2^{a+i} + 2^{a}$ with $i \in \{1,2,3,5,7\}$ and $a \geq 0$.
Firstly one can computationally verify the claimed values $k$ of the form $2^i + 1$ (i.e., those special values with $a = 0$). This can be done with a computer or by hand using some of the observations that follow. We omit the proof of this part. Once this is done, it suffices to show that $N_{4k} \equiv N_{4(2^i+1)}$ for all $k$ of the form $2^{a+i} + 2^a = 2^a(2^i + 1)$. To show the latter claim, recall that Carlitz \cite{Carlitz53} proved that $2^{a+3}$ divides $2B_{4k} - 1$ since $2^{a+2}$ divides $4k$ (cf. \cite[Theorem 2]{Howard95}). We write $B_{4k} = 4 k N_{4k} / D_{4k} = \Od[4k] N_{4k} / (2 OD_{4k})$ in terms of the numerator $N_{4k}$ and denominator $D_{4k} = 2^{\nu_2(4k)+1} OD_{4k}$ of the divided Bernoulli number $B_{4k}/(4k)$. Multiplying by $2 OD_{4k}$ and applying the Carlitz congruence, we have that $(2^i + 1) N_{4k} \equiv OD_{4k}$ modulo $2^{a+3}$ and hence modulo $8$. To complete the proof, it suffices to show that the reduction of $OD_{4k}$ modulo $8$ is independent of $a$ where again $k = 2^a(2^i + 1)$.

We have that $OD_{4k}$ is the product of $p^{1 + \nu_p(4k)}$ over odd primes $p$ such that $p-1 | 4k$. Note that $\nu_p(4k) = \nu_p(2^i+1)$ for odd primes $p$. Note also that $p \neq 2$ and $p-1 | 4k$ implies that $p = 2^c d + 1$ for some $1 \leq c \leq a + 2$ and some divisor $d$ of $2^i+1$. Note moreover that $c \geq 3$ implies that $p \equiv 1 \pmod{8}$. Hence 
	\[OD_{4k} \equiv  \prod_{p \in P_2 \cup P_4} p^{1 + \nu_p(2^i + 1)} \pmod{8}\]
where $P_2$ is the set of primes $p$ of the form $2d+1$ for some divisor $d$ of $2^i + 1$ and where, similarly, $P_4$ is the set of primes $p$ of the form $4d+1$ for some divisor $d$ of $2^i+1$. Clearly this quantity is independent of $a$, so we have $N_{4\cdot 2^a(2^i+ 1)} \equiv N_{4(2^i+1)} \pmod{8}$, as claimed.
\end{proof}

Note that the problem in dimensions less than $256$ has been resolved in \cite{Su14,FowlerSu16}. Now we are ready to prove the non-existence dimensions included in Theorem \ref{thm:UpTo512}.

\begin{theorem}[Theorem A]
There does not exist a $\QP^2$ in dimension $n=8k$ when $256<n<2^{13}$ except possibly when $n \in \{544, 1024, 2048, 4160, 4352\}$.
\end{theorem}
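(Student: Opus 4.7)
I would prove this by enumerating all candidate dimensions in the range, culling most with Corollary \ref{N-mod8}, and then handling each remaining case individually with Proposition \ref{N-prime} or a direct verification of the quadratic residue equations \eqref{iffa2} and \eqref{iffa3}.

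The first step is enumeration. By \cite{FowlerSu16} every candidate $n = 8k$ with $n > 4$ must have $k = 2^a$ or $k = 2^a + 2^b$ with $a < b$. Requiring $32 < k < 2^{10}$ yields finitely many candidates. Applying Corollary \ref{N-mod8} kills every sum-type candidate whose exponent gap $b - a$ lies in $\{1,2,3,5,7\}$. After this pass, setting aside the five exceptions $n \in \{544, 1024, 2048, 4160, 4352\}$, I expect to be left with ten dimensions: the power-of-two dimensions $n \in \{512, 4096\}$ and the sum-type dimensions $n \in \{520, 1040, 1088, 2056, 2080, 2176, 4104, 4112\}$, whose exponent gaps $b - a$ lie in $\{4, 6, 8, 9\}$.

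For each remaining dimension, the primary strategy is to invoke Proposition \ref{N-prime}: exhibit a prime $p \equiv \pm 3 \pmod 8$ dividing $N_{4k}$. Such a prime is typically found by scanning published tables of irregular pairs $(p, m)$ for one with $m \equiv 4k \pmod{p-1}$ and $p - 1 \nmid 4k$, and then invoking the Kummer congruence to conclude $p \mid N_{4k}$. When Proposition \ref{N-prime} does not apply directly, the backup is to attack the quadratic residue equation from Theorem \ref{case1} or \ref{case2} modulo a suitable odd prime divisor $q$ of the modulus --- a factor of $2^{4k-1}-1$, $N_{4k}$, or $OD_{2k}$ --- and demonstrate that the relevant Legendre symbol equals $-1$.

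The main obstacle is the size of $N_{4k}$ at the largest dimensions, especially $k \in \{513, 514\}$ where $4k > 2000$; direct factorization of $N_{4k}$ is impractical, and one has to rely on tables of irregular pairs (as used in Wagstaff's work) together with Kummer congruences to locate at least one obstructing prime $p \equiv \pm 3 \pmod 8$ in each of the ten dimensions. Once that is done in each case, Proposition \ref{N-prime} (or the Legendre symbol fallback on the rare occasion it is needed) closes the proof.
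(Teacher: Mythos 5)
Your strategy is exactly the paper's: enumerate the candidates $8k$ with $k=2^a$ or $k=2^a+2^b$ in the range, discard those with exponent gap in $\{1,2,3,5,7\}$ via Corollary \ref{N-mod8}, and kill each survivor by exhibiting a prime factor $p\equiv\pm3\pmod 8$ of $N_{4k}$ (located through irregular-pair tables and the Kummer congruence, i.e.\ Proposition \ref{N-Kummer}) so that Proposition \ref{N-prime} applies. The paper's Table \ref{table-prime} records precisely such a prime for each surviving dimension, including $37\mid N_{2048}$ for $n=4096$ and $67\mid N_{256}$ for $n=512$.

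There is, however, a concrete error in your enumeration: you have omitted $n=272=8(2^5+2^1)$. Since $272>256$, it lies in the stated range; its exponent gap is $b-a=4$, so Corollary \ref{N-mod8} does not exclude it; and it is not among the five listed exceptions. So your list of survivors should contain eleven dimensions, not ten, with $272$ added to the sum-type list (it is the unique gap-$4$ survivor coming from $a=5$). The paper disposes of it the same way as the others, via the prime factor $29835096585483934621\equiv 5\pmod 8$ of $N_{136}$. With that case restored, your argument coincides with the paper's; note also that in practice the Legendre-symbol fallback you describe is never needed here --- every non-exceptional survivor is handled by Proposition \ref{N-prime} alone.
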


\begin{proof}
For all $8k=8(2^a+2^b)$ strictly between $256=8(2^5)$ and $8192=8(2^{10})$ except the five exceptions stated in the theorem, we show that the numerator of the divided Bernoulli number $N_{4k}$ either is congruent to $\pm 3\pmod{8}$ itself, or it has a prime divisor $p \equiv \pm 3\pmod{8}$. Then Proposition \ref{N-prime} concludes these dimensions do not support a $\QP^2$. 

Firstly, we eliminate all dimensions $8k$ where $k=2^a + 2^{a-i}$ with $i \in \{1,2,3,5,7\}$, since we have shown in Corollary \ref{N-mod8} that $N_{4k}$ itself is congruent to $\pm 3 \pmod 8$ in these dimensions. In Table \ref{table-prime}, we list all the remaining values of $k=2^a + 2^b$ in the range we consider. While $N_{4k} \equiv \pm 1 \pmod 8$ in each of the dimensions, we frequently find $N_{4k}$ has an irregular prime factor $p \equiv \pm 3\pmod{8}$, which then obstruct existence of $\QP^2$ by Proposition \ref{N-prime}. 

Note that for the values of $k$ of the form $2^a$ and in the range we consider, we are able exclude $k = 2^6$ (i.e., dimension $2^9$) and $k = 2^9$ (i.e., dimension $2^{12}$) using the irregular primes $67$ and $37$, respectively.
\end{proof}

\begin{remark}
We remark on the limits of this method to further obstruct existence of $\QP^2$. The Bernoulli numerators and their irregular prime factors are of great importance in number theory, and with the aid of computers, factorizations of high order Bernoulli numerators have been done by various authors. Sam Wagstaff's webpage \cite{Wagstaff13} maintains a list of known prime factors of the Bernoulli numerators up to $B_{300}$. We used this list to check whether $N_{4k}$ has a prime factor $p\equiv \pm 3\pmod 8$.

In dimensions $8k \in \{544, 1024, 2048, 4160, 4352, 8192\}$, we put ``?"  in the column of irregular prime. This indicates that, based on \cite{Wagstaff13}, we do not know whether $N_{4k}$ has a prime factor $p\equiv \pm 3\pmod{8}$. 
\end{remark}

\begin{table}[h]
  \caption{Dimensions up to $2^{13}$ of the form $8k = 8(2^a + 2^b)$ with $a > b$ that are not ruled out by Proposition \ref{N-prime}.
  }
  \label{table-prime}
\begin{tabular}{c|c|r|c}
\shortstack{$(a,b)$\\ \ } & \shortstack{prime factor $p|N_{4k}$\\  
 with $p\equiv \pm 3 \pmod{8}$} &  \shortstack{dimension \\
 $n=8k$} &
   \shortstack{there exists a $\QP^2$ \\
   in dimension $n$ ?} \\
   \hline
   \hline
$(5,1)$ & $29835096585483934621$ & $272$ & No\\\hline
   $(5,5)$ & $67$ &   $8(2^6)=512$ &No\\  \hline
   $(6,0)$ & $15897346573$ &  $520$& No\\\hline
  $(6,2)$ & $?$ & $544$ & ? \\\hline
   $(6,6)$ & $?$ & $8(2^7)=1024$ & $?$\\\hline
   $(7,1)$ &$67$ & $1040$ & No\\\hline
   $(7,3)$  & $811$ &  $1088$ & No\\\hline
   $(7,7)$ &  $?$ & $8(2^8)=2048$ & $?$\\\hline
   $(8,0)$  & $26251$ & $2056$ & No\\\hline
   $(8,2)$  & 37 & $2080$ &  No\\   \hline
   $(8,4)$  & 59 &  $2176$ & No\\   \hline
  $(8,8)$ & 37 & $8(2^9)=4096$ & No\\\hline
   $(9,0)$  & 4349 & $4104$ &  No\\\hline
   $(9,1)$  & 1669 & $4112$ & No\\ \hline
   $(9,3)$  & $?$ & $4160$ & $?$ \\  \hline
   $(9,5)$  & $?$ & $4352$ & $?$ \\  \hline
   $(9,9)$ &$?$ &  $8(2^{10})=8192$ & $?$\\
\end{tabular}
\end{table}

We now state a second approach to obtain more nonexistence results. We thank Sam Wagstaff for pointing us to the Kummer's congruence, which is applied to extend Proposition \ref{N-prime} to rule out families of dimensions by the obstructing irregular primes.  
 
\begin{proposition}\label{N-Kummer}
If the numerator $N_m$ of $\frac{|B_{m}|}{m}$ has a prime factor $p \equiv \pm 3 \pmod{8}$,
then for any $k$ such that $4k\equiv m\pmod{p-1}$, there does not exist a $\QP^2$ in dimension $8k$.  
\end{proposition}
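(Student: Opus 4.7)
My plan is to apply Kummer's classical congruence on divided Bernoulli numbers to transfer the $p$-divisibility from $N_m$ to $N_{4k}$, and then invoke the previously established Proposition \ref{N-prime}.

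First, I would observe that the hypothesis $p \mid N_m$ forces $p - 1 \nmid m$. This is immediate from the von Staudt--Clausen theorem recalled in Section \ref{sec:ReducingSingleEquation}: if $p - 1 \mid m$, then $p$ would divide the denominator $D_m$, contradicting $p \mid N_m$. In particular, $B_m / m$ is $p$-integral, and the condition $p \mid N_m$ is equivalent to $v_p(B_m/m) \geq 1$. For any $k$ with $4k \equiv m \pmod{p-1}$, the same reasoning shows $p - 1 \nmid 4k$, so that $B_{4k}/(4k)$ is also $p$-integral.

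Next, I would invoke Kummer's congruence in its classical form: for positive even integers $m, n$ with $m \equiv n \pmod{p-1}$ and $p-1 \nmid m$, one has $B_m/m \equiv B_n/n \pmod p$ inside $\Z_{(p)}$. Taking $n = 4k$, this gives $B_{4k}/(4k) \equiv B_m/m \equiv 0 \pmod p$, so $p \mid N_{4k}$. Since $p \equiv \pm 3 \pmod 8$ is then a prime divisor of $N_{4k}$, Proposition \ref{N-prime} rules out the existence of a $\QP^2$ in dimension $8k$, as desired.

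The hard part, if there is one, will really just be correctly invoking Kummer's congruence and tracking $p$-adic valuations, both of which rest on the von Staudt--Clausen bookkeeping above. I do not anticipate any substantive obstacle beyond citing these classical tools; the proposition should reduce in essentially one step to the already-proved Proposition \ref{N-prime}.
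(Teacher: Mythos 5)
Your proposal is correct and follows exactly the paper's argument: apply Kummer's congruence to deduce $p \mid N_{4k}$ from $p \mid N_m$ and $4k \equiv m \pmod{p-1}$, then invoke Proposition \ref{N-prime}. Your extra verification via von Staudt--Clausen that $p-1 \nmid m$ (so that Kummer's congruence applies) is a careful touch the paper leaves implicit, but the route is the same.
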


\begin{proof}
Suppose $p$ is an prime factor of the numerator of $\frac{|B_{m}|}{m}$. By Kummer's congruence, whenever $4k\equiv m\pmod{p-1}$,
$$\df{B_{4k}}{4k}\equiv\df{B_{m}}{m}\equiv 0\pmod{p},$$
so $p$ is also a prime factor of the numerator of $\frac{|B_{4k}|}{4k}$. If, in addition, $p\equiv \pm 3\pmod{8}$, Proposition \ref{N-prime} implies that no $\QP^2$ exists in dimension $8k$.
\end{proof}

Applying Proposition \ref{N-Kummer} to the first irregular prime $37$, which divides $N_{32}$, we obtain the following result.

\begin{proposition}[Obstruction by the irregular prime $37$ dividing $N_{32}$]\label{kummer-37}
There does not exist a $\QP^2$ in any dimension of the form $n=2^{6r+5}+2^{6s+5}$ or $n=2^{6r+3} + 2^{6s+7}$ for any $r,s\in \Z_{\geq 0}$. In particular, there is no $\QP^2$ in dimension $2^{6r}$ for any $r \geq 1$.
\end{proposition}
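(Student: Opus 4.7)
The strategy is to apply Proposition~\ref{N-Kummer} directly with $m = 32$ and the irregular prime $p = 37$. First I would verify the two hypotheses: that $37 \mid N_{32}$, which is classical (it appears on Wagstaff's tables, and can also be confirmed by direct computation of $B_{32}/32$ together with the von Staudt--Clausen theorem for the denominator), and that $37 \equiv 5 \equiv -3 \pmod{8}$. Given this, Proposition~\ref{N-Kummer} obstructs the existence of a $\QP^2$ in dimension $n = 8k$ for every $k$ with $4k \equiv 32 \pmod{36}$, i.e.\ for every $k \equiv 8 \pmod{9}$.

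Next I would combine this with the result of \cite{FowlerSu16} that any candidate $k$ must be of the form $2^a$ or $2^a + 2^b$ with $a \neq b$, and determine which such $k$ satisfy $k \equiv 8 \pmod{9}$. Since the powers of two modulo $9$ cycle with period $6$ as $(2^0, 2^1, 2^2, 2^3, 2^4, 2^5) \equiv (1,2,4,8,7,5) \pmod{9}$, one has $2^a \equiv 8 \pmod{9}$ if and only if $a \equiv 3 \pmod{6}$. Writing $a = 6r + 3$ then gives $n = 8 \cdot 2^a = 2^{a+3} = 2^{6(r+1)}$, which yields the ``in particular'' assertion that no $\QP^2$ exists in dimension $2^{6r}$ for any $r \geq 1$.

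For $k = 2^a + 2^b$ with $a \neq b$, I would run the finite check over all pairs $(a \bmod 6, b \bmod 6)$ and keep those whose corresponding powers of two sum to $8 \pmod{9}$. Inspection of the cycle shows that the only such residue pairs are $(0,4)$ (from $1 + 7 \equiv 8$) and $(2,2)$ (from $4 + 4 \equiv 8$). The pair $(0,4)$ gives $k = 2^{6r} + 2^{6s+4}$, hence $n = 2^{6r+3} + 2^{6s+7}$; the pair $(2,2)$ gives $k = 2^{6r+2} + 2^{6s+2}$ with $r \neq s$, hence $n = 2^{6r+5} + 2^{6s+5}$. These are exactly the two families in the statement. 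The only input beyond the machinery already established is the factorization $37 \mid N_{32}$; everything else is a finite arithmetic check modulo $9$, so no serious obstacle is anticipated.
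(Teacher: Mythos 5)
Your proposal is correct and follows the same route as the paper: apply Proposition~\ref{N-Kummer} with $p=37$ and $m=32$, reduce the condition $4k\equiv 32\pmod{36}$ to $2^a+2^b\equiv 8\pmod 9$, and use the period-$6$ cycle of powers of two modulo $9$ to identify the residue pairs $(a,b)\equiv(2,2),(0,4)\pmod 6$. The only difference is that you spell out the finite check and the $k=2^a$ case more explicitly than the paper does.
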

\begin{proof}

Note that $4k=4(2^a+2^b)\equiv 32 \pmod{37-1}$ whenever $2^a+2^b\equiv 8\pmod{9}$. This holds whenever $(a, b)\equiv (2, 2) \pmod{6}$ or $(a,b) \equiv (0, 4)\pmod{6}$, as $2^6\equiv 1\pmod{9}$ by the Euler's theorem. Then by Proposition  \ref{N-Kummer}, these two cases correspond to the dimensions $n=8k$ stated in the theorem. 
\end{proof}

We apply Proposition \ref{N-Kummer} to the first thirteen irregular primes congruent to $\pm 3\pmod{8}$, the results are listed in Table \ref{table-kummer}. The following proposition summarizes the families of nonexistence dimensions of the form $n=2^a$ obstructed by the primes $p \in \{37, 67, 101, 59, 389, 347\}$. Together with Corollary \ref{N-mod8}, this completes the proof of Theorem \ref{thm:InfinitelyManyPowersOfTwo}. 

\begin{proposition}[Obstruction to dimensions $2^a$ by the first few irregular primes]\label{kummer-case1}
There does not exist a $\QP^2$ in any dimension of the form $2^{6r+6}$, $2^{10r+9}$, $2^{20r+16}$, $2^{28r+28}$, $2^{48r+21}$, or $2^{172r+138}$ for any $r\in\Z_{\geq 0}$.
\end{proposition}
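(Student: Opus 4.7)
The plan is to apply Proposition \ref{N-Kummer} once for each of the six irregular primes $p \in \{37, 67, 101, 59, 389, 347\}$, each of which satisfies $p \equiv \pm 3 \pmod 8$. A dimension of the form $2^a$ has $8k = 2^a$ with $k = 2^{a-3}$, so that $4k = 2^{a-1}$. Hence the proposition rules out a $\QP^2$ in dimension $2^a$ as soon as one finds an irregular pair $(p,m)$, meaning $p \mid N_m$, satisfying
$$2^{a-1} \equiv m \pmod{p-1}.$$
The task is therefore to translate each prime's known irregular pair into an arithmetic progression of exponents $a$.

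The first step is to invoke a standard table of irregular pairs (e.g., \cite{Wagstaff13}), which supplies $(37, 32)$, $(67, 58)$, $(101, 68)$, $(59, 44)$, $(389, 200)$, and $(347, 280)$. The second step is, for each pair, to exhibit one small exponent $a_0$ with $2^{a_0-1} \equiv m \pmod{p-1}$; direct computation yields $a_0 = 6, 9, 16, 28, 21, 138$, respectively. The third step is to determine the period of the sequence $\bigl(2^{a-1} \bmod (p-1)\bigr)_{a \geq a_0}$, which equals the multiplicative order of $2$ modulo $\Od[p-1]$ once $2^{a-1}$ absorbs the $2$-part of $p-1$. These orders come out to $6, 10, 20, 28, 48, 172$, matching the six arithmetic progressions in the statement; Proposition \ref{N-Kummer} then delivers the non-existence conclusion.

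The main obstacle lies in verifying the two larger orders. For the order of $2$ modulo $97$ (used for $p=389$): since $97 \equiv 1 \pmod 8$, $2$ is a quadratic residue and the order divides $48 = \phi(97)/2$; equality is confirmed by checking $2^{24} \equiv -1 \pmod{97}$. For the order of $2$ modulo $173$ (used for $p=347$): since $173 \equiv 5 \pmod 8$, $2$ is a quadratic non-residue, so the order does not divide $86 = \phi(173)/2$; among the divisors of $\phi(173) = 172 = 4 \cdot 43$ not dividing $86$, only $4$ and $172$ remain, and $2^4 = 16 \not\equiv 1 \pmod{173}$ rules out the former. The smaller orders are immediate by direct computation (order of $2$ modulo $9$, $33$, $25$, $29$ is $6, 10, 20, 28$, respectively). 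The final routine piece is the congruence $2^{137} \equiv 280 \pmod{346}$, verified by repeated squaring. With all six pairs, base exponents, and orders in hand, the six families of obstructed dimensions of the form $2^a$ follow at once.
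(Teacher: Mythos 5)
Your proposal is correct and follows essentially the same route as the paper, which simply applies Proposition \ref{N-Kummer} to the six irregular pairs $(37,32)$, $(59,44)$, $(67,58)$, $(101,68)$, $(389,200)$, $(347,280)$ and records the resulting residue classes (the power-of-two case being the diagonal $a=b$ entries of Table \ref{table-kummer}); your base exponents and periods all check out. One small logical slip: for $p=389$, knowing that $2$ is a quadratic residue mod $97$ and that $2^{24}\equiv -1 \pmod{97}$ only narrows the order to $16$ or $48$ (an element of order $16$ with $2^{8}\equiv -1$ would also satisfy $2^{24}\equiv -1$), so you should additionally check $2^{16}\equiv 61\not\equiv 1\pmod{97}$ (equivalently $2^{8}\equiv 62\not\equiv -1$) to conclude the order is $48$.
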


\begin{remark}
Proposition \ref{N-Kummer} provides new infinite families of dimensions that do not support a $\QP^2$. Moreover, it seems that most irregular primes, of which there exist infinitely many, provide such families of dimensions that do not support a $\QP^2$. It seems to be a difficult problem to classify the dimensions that are obstructed by such arguments. 
\end{remark}

\begin{scriptsize}
\begin{table}[h]
  \caption{Dimensions ruled out by Proposition \ref{N-Kummer}}
  \label{table-kummer}
\begin{tabular}{c|c|c}
\shortstack{irregular prime $p\,|\,N_{m}$,\\  
 $p\equiv\pm3\pmod{8}$} & \shortstack{$(a,b)$ such that \\ $4k=4(2^a+2^b)\equiv m\pmod{p-1}$} &
   \shortstack{dim $n=8k$ ($n>256$) that  \\ does not support a $\QP^2$
   } \\
   \hline
   \hline
\shortstack{$37\,|\,N_{32}$\\ \ \ } & \shortstack{$(a,b)\equiv (2,2)$;$(0,4)\pmod{6}$\\ \ \ } &
\shortstack{$2^{6r+5}+2^{6s+5}; $\\
$2^{6r+3}+2^{6s+7}$}\\ \hline
\shortstack{$59\,|\,N_{44}$\\ \ \ \\\ \ \\ \ \ \\ \ \ \\ \ } & \shortstack{$(24,24);(0,23); (1,10); (2,12); \ \ \ \ \  \ \ \ \ \  \ \ \ \ \ $\\$(3,5); (4,8); (6,22);(7,14); $ \\ $(9,17); (11,26);(13,19); (15,18); $ \\ $(16,27); (20,21)\pmod{28}$} &
\shortstack{$2^{28r+27}+2^{28s+27};$\\
$2^{28r+3}+2^{28s+26};$\\
$\ldots$}\\ \hline
\shortstack{$67\,|\,N_{58}$\\ \ \ } & \shortstack{$(5,5)$;$(1,7)\pmod{10}$\\ \ \ } &
\shortstack{$2^{10r+8}+2^{10s+8};$\\
$2^{10r+4}+2^{10s+10}$\\
$\ldots$}\\ \hline
\shortstack{$101\,|\,N_{68}$\\ \ \ \\\ \ } & \shortstack{$(12,12);(0,4); (2,19);  \ \ \ \  \ \ \ \ \  \ \ \ \ \ $\\$ (3,14);(6,7); (8,16);  \ \ \ \  \ \ \ \ $\\$ (10,15);(11,18)\pmod{20}$} &
\shortstack{$2^{20r+15}+2^{20s+15};$\\
$2^{20r+3}+2^{20s+7};$\\
$\ldots$}\\ \hline
$131\,|\,N_{22}$ & no such $(a,b)$ &
\ \\ \hline
$149\,|\,N_{130}$ & no such $(a,b)$ &
\ \\ \hline
$157\,|\,N_{62}$ and $N_{110}$ & no such $(a,b)$ &
\ \\ \hline
\shortstack{$283\,|\,N_{20}$\\ \ \ \\\ \ } & \shortstack{$(0,2); (4,40); (14,22);  \ \ \ \ \  \ \ \ \ \  \ \ \ \ \ $\\$ (16,30);(18,34);(24,42)\pmod{46}$} &
\shortstack{$2^{46r+3}+2^{46s+5};$\\
$2^{46r+7}+2^{46s+43};$\\
$\ldots$}\\ \hline
\shortstack{$293\,|\,N_{156}$\\ \ \ } & \shortstack{$ (1,8)\pmod{9}$\\ \ \ } &
\shortstack{$2^{9r+4}+2^{9s+11}$\\
\ \ \\
\ \ }\\ \hline
$307\,|\,N_{88}$ & no such $(a,b)$ &
\ \\ \hline
\shortstack{$347\,|\,N_{280}$\\ \ \ } & \shortstack{$(134,134);(0, 47);\ \ \ \ \  \ \ \ \ \  \ \ \ \ \ $\\$ (26, 141);\cdots
\pmod{172}$\\ \ \ } &
\shortstack{$2^{172r+137}+2^{172s+137};$\\
$2^{172r+3}+2^{172s+50};$\\
$\ldots$}\\ \hline
\shortstack{$379\,|\,N_{174}$\\ \ \ } & \shortstack{$(2,9)$;$(3,14);(8,15)\pmod{18}$\\ \ \ } &
\shortstack{$2^{18r+5}+2^{18s+12};$\\
$2^{18r+6}+2^{18s+17};$\\
$\ldots$}\\ \hline
\shortstack{$389\,|\,N_{200}$\\ \ \ \\\ \ \\ \ \ \\ \ \ \\ \ } & \shortstack{$(17,17);(1,23); (3,39); (8,45); \ \ \ \ \  \ \ \ \ \  \ \ \ \ \ $\\$(10,26); (13,20); (16,35);(19,42); $ \\ $(28,31); (36,41)\pmod{48}$} &
\shortstack{$2^{48r+20}+2^{48s+20};$\\
$2^{48r+4}+2^{48s+26};$\\
$\ldots$}\\ \hline
\end{tabular}
\end{table}
\end{scriptsize}

\section{Spin $\QP^2$}\label{sec:Spin}

As studied in \cite{FowlerSu16}, if a smooth manifold $M$ is a $\QP^2$ that admits a spin structure, the following conditions must hold true:
\begin{enumerate}[(1')]
\item
(Hirzebruch signature equation)\ \ $\langle \LP(p_k,p_{2k}),\mu\rangle=s_{k,k}\langle p_k^2,\mu\rangle+s_{2k}\langle p_{2k},\mu\rangle=1$,
\item
(Stong integrality condition from $\Omega^{Spin}_{8k}$)\
\begin{equation}\label{spin-stong}\langle\Z[e_1,\,e_2, \ldots]\mathord{\cdot}\AP\,,\,\mu\rangle\in\Z\end{equation}
\item
(Pontryagin numbers of $\QP^2$)
\begin{equation}\langle p_k^2,\mu\rangle= x^2 \mbox{ and } \langle p_{2k},\mu\rangle=y \ \ \mbox{ for some integers } x \mbox{ and } y\nonumber\end{equation}
\end{enumerate}
Condition (2') characterizes the integral lattice in $\Q^{p(8k)}$ formed by all possible Pontryagin numbers of smooth $8k$-dim Spin manifolds. The total $\AP$ class can be written as
$$\AP=1+a_kp_k+a_{k,k}p_k^2+a_{2k}p_{2k},$$
where the coefficients 
\begin{eqnarray*}
a_k&=&\df{-|B_{2k}|}{2(2k)!},\\
a_{k,k}&=&\frac{1}{2}(a_{k}^2-a_{2k}).
\end{eqnarray*}

Similar to the smooth case, the signature equation (1') and the spin integrality condition (2') together can be written as a set of integrality conditions on the Pontryagin numbers $x^2=\langle p_k^2,\mu\rangle$ and $y=\langle p_{2k},\mu\rangle$. In \cite{FowlerSu16}, it was shown that there is no solution to (1') and (2') together in dimension 32, which proved the nonexistence of spin structure on any 32 dimensional $\QP^2$. Now we prove the following theorem, a special case of which asserts the nonexistence of Spin $\QP^2$ in any dimension greater than $16$. 

\begin{theorem}\label{thm:SpinPLUS}
Let $M^{8k}$ be a simply connected closed smooth manifold that admits a spin structure. Assume all Pontryagin numbers of $M$ vanish except possibly for $\xi = p_{k}^2[M]$ and $y = p_{2k}[M]$. If the signature $\sigma = \sigma(M)$ is nonzero, then
	\[ \nu_2(2\sigma) \geq 4k - 2\nu_2(k) - 5.\]
\end{theorem}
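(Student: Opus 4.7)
The plan is to mirror the reduction performed in Section~\ref{sec:ReducingIntegralityConditions} for Theorem~\ref{SOiff}, but with $\AP$ in place of $\LP$ and the full ring $\Z$ in place of $\Z[1/2]$. The key inputs are two specific Spin Stong integrality conditions, $\langle e_1 \cdot \AP,\mu\rangle\in\Z$ and $\langle e_1^2 \cdot \AP,\mu\rangle\in\Z$, together with the Hirzebruch signature equation multiplied by two and rewritten as
\[2\sigma = s_k^2\,\xi + s_{2k}\,z,\]
where $z = 2y-\xi$; this reformulation follows from $s_{k,k}=\frac{1}{2}(s_k^2 - s_{2k})$ exactly as in Section~\ref{sec:ReducingSingleEquation}.

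First I would apply Formula~\eqref{e1a} from Lemma~\ref{el}, which depended only on the vanishing of $p_\omega$ outside $p_k$, $p_k^2$, $p_{2k}$ and so applies equally well to $\AP$. Top-dimensional evaluation gives
\[\langle e_1^2 \cdot \AP,\mu\rangle \;=\; \frac{\xi}{[(2k-1)!]^2},\qquad \langle e_1 \cdot \AP,\mu\rangle \;=\; \frac{(-1)^{k+1} a_k\,\xi}{(2k-1)!} \,-\, \frac{z}{2(4k-1)!}.\]
Forcing both values into $\Z$ produces integers $m,r\in\Z$ with $\xi = [(2k-1)!]^2\,m$ and
\[z \;=\; \frac{2(-1)^{k+1} a_k(4k-1)!}{(2k-1)!}\,\xi \,-\, 2(4k-1)!\,r.\]
Substituting both expressions into $2\sigma = s_k^2 \xi + s_{2k} z$ then displays $2\sigma$ as a sum of three explicit terms,
\[2\sigma \;=\; s_k^2[(2k-1)!]^2\,m \,+\, 2(-1)^{k+1} s_{2k} a_k(4k-1)!(2k-1)!\,m \,-\, 2s_{2k}(4k-1)!\,r.\]

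The remaining task is a $2$-adic valuation calculation on these three summands. Using $\nu_2(|B_{2k}|)=-1$ (from von Staudt--Clausen) and the formulas $\nu_2((2k)!)=2k-\wt(k)$, $\nu_2((2k-1)!)=2k-1-\wt(k)-\nu_2(k)$, and $\nu_2((4k-1)!)=4k-2-\wt(k)-\nu_2(k)$ already appearing in Section~\ref{sec:ReducingSingleEquation}, one finds
\[\nu_2(a_k) = \wt(k)-2k-2, \qquad \nu_2(s_k) = \nu_2(s_{2k}) = \wt(k)-1.\]
Plugging these in, the three summands have $\nu_2$ at least $4k-4-2\nu_2(k)$, $4k-5-2\nu_2(k)$, and $4k-2-\nu_2(k)$ respectively; the middle one is the minimum, giving the desired bound $\nu_2(2\sigma)\geq 4k-2\nu_2(k)-5$. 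The delicate point is the $2$-adic bookkeeping---in particular, the coincidence $\nu_2(s_k)=\nu_2(s_{2k})$ means that small errors in the exponents would shift the final constant, so verifying that $-5$ (rather than $-4$ or $-6$) is the sharp value is where care is needed. Note that this approach never invokes the integrality of $\AP[M]$ itself.
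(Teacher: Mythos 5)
Your proof is correct, and the final bound agrees with the paper's, but you take a slightly different route through the integrality conditions. The paper's proof uses the pair $\langle \AP,\mu\rangle\in\Z$ (integrality of the $\hat A$-genus, condition \eqref{A}) and $\langle e_1e_1\mathord{\cdot}\AP,\mu\rangle\in\Z$; it then eliminates $z$ between the signature equation and the $\AP$-condition using the relation $s_{2k}=-2^{4k+1}(2^{4k-1}-1)a_{2k}$, which after the algebraic identity $2(2^{2k-1}-1)^2+2^{4k-1}-1=(2^{2k}-1)^2$ leaves only two summands whose $2$-adic orders are $4k-5-2\nu_2(k)+\nu_2(\xi_1)$ and $4k+2+\nu_2(m)$. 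You instead use the pair $\langle e_1\mathord{\cdot}\AP,\mu\rangle\in\Z$ and $\langle e_1e_1\mathord{\cdot}\AP,\mu\rangle\in\Z$, solving the $e_1\mathord{\cdot}\AP$ condition for $z$ directly and substituting; this is the exact Spin analogue of the reduction used for Theorem \ref{SOiff} in the smooth case, and it is why your argument never needs $\AP[M]\in\Z$. Both pairs of conditions are legitimate instances of the Stong integrality \eqref{spin-stong}, your top-degree evaluations of $e_1\mathord{\cdot}\AP$ and $e_1^2\mathord{\cdot}\AP$ follow correctly from \eqref{e1a}, and I have checked your $2$-adic bookkeeping: $\nu_2(a_k)=\wt(k)-2k-2$, $\nu_2(s_k)=\nu_2(s_{2k})=\wt(k)-1$, and the three summands do have orders at least $4k-4-2\nu_2(k)$, $4k-5-2\nu_2(k)$, and $4k-2-\nu_2(k)$, with the middle term always the minimum. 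What the paper's choice buys is a conceptual emphasis on the $\hat A$-genus obstruction (as highlighted in the introduction) and a two-term final expression; what your choice buys is uniformity with the smooth-case reduction of Section \ref{sec:ReducingIntegralityConditions} at the cost of a three-term expression. Either way the sharp constant $-5$ comes from the same source, namely the cross term carrying one factor each of $a_k$ (or $\xi_1$'s square of $a_k(2k-1)!$) and the factorials.
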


In our case of Spin $\QP^2$, the dimension is either four or of the form $8k$. Since a 4--dimensional Spin manifold must have even intersection form, a $\QP^2$ in dimension four cannot be Spin. For dimensions $8k$, Theorem \ref{thm:SpinPLUS} applies. Since the signature is $1$, we have the estimate
	\[1 \geq 4k - 2\nu_2(k) - 5 \geq 4k - 2\log_2(k) - 5,\]
which is contradiction unless $k \in \{1,2\}$. Hence the following is immediate.

\begin{corollary}[Theorem \ref{thm:Spin}]
A $\QP^2$ admitting a Spin structure can only exist in dimensions $8$ and $16$, i.e., the dimensions of $\HP^2$ and $\OP^2$.
\end{corollary}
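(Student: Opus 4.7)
The plan is to apply Theorem \ref{thm:SpinPLUS} directly, after first disposing of the low-dimensional case. By the preliminary reduction recalled at the start of Section~1, a $\QP^2$ has dimension either $4$ or $8k$ for some positive integer $k$. In dimension $4$, a $\QP^2$ is a simply connected, closed, smooth $4$-manifold whose intersection form is $\langle 1\rangle$ (after choosing the orientation so that the signature is $1$). Since every closed Spin $4$-manifold has even intersection form, no Spin $\QP^2$ exists in dimension $4$; this eliminates $\CP^2$ from consideration and leaves only dimensions of the form $8k$.

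Next I would verify that the hypotheses of Theorem \ref{thm:SpinPLUS} apply to a Spin $\QP^2$ $M^{8k}$. By the cohomology ring structure, every rational Pontryagin class of $M$ vanishes except possibly for $p_k$ and $p_{2k}$, so the only possibly nonzero Pontryagin numbers are $\xi=\langle p_k^2,\mu\rangle$ and $y=\langle p_{2k},\mu\rangle$. Orienting $M$ so that $\sigma(M)=1\neq 0$, the theorem gives
\[ 1 \;=\; \nu_2(2\sigma) \;\geq\; 4k-2\nu_2(k)-5.\]

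The remaining step is to solve this inequality in $k\geq 1$. Using the crude bound $\nu_2(k)\leq\log_2(k)$, it suffices to rule out all $k$ with $4k-2\log_2(k)>6$. The function $f(k)=4k-2\log_2(k)$ satisfies $f'(k)=4-\tfrac{2}{k\ln 2}>0$ for $k\geq 1$, so $f$ is strictly increasing; since $f(2)=6$ and $f(3)\approx 8.83$, the inequality forces $k\in\{1,2\}$. These correspond to $n=8$ and $n=16$, and both are realized by $\HP^2$ and $\OP^2$, completing the proof.

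The "hard part" of the corollary is essentially outsourced to Theorem \ref{thm:SpinPLUS}; once that $2$-adic estimate is in hand, the deduction of dimensions $8$ and $16$ is a one-line inequality check together with the standard remark on even intersection forms of Spin $4$-manifolds. The only thing to be careful about is ensuring that the normalization $\sigma=1$ is compatible with the hypothesis $\sigma\neq 0$ in Theorem \ref{thm:SpinPLUS}, which is immediate from the definition of a $\QP^2$.
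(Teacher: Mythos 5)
Your proposal is correct and follows essentially the same route as the paper: dispose of dimension $4$ via the even intersection form of Spin $4$-manifolds, then apply Theorem \ref{thm:SpinPLUS} with $\sigma=1$ and the bound $\nu_2(k)\leq\log_2(k)$ to force $k\in\{1,2\}$. The only difference is that you spell out the monotonicity check of $4k-2\log_2(k)$, which the paper leaves implicit.
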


\begin{proof}[Proof of Theorem \ref{thm:SpinPLUS}]
Assume that such a manifold exists. Its Pontryagin numbers $\xi = p_{k}^2[M]$ and $y = p_{2k}[M]$ satisfy the signature equation, the $\AP$ genus condition, and the $e_1e_1\mathord{\cdot} \AP$ condition. Hence
\begin{subnumcases}
\ \langle \LP,\mu\rangle=s_{k,k} \xi +s_{2k}y=\sigma. \label{L}\\
\langle \AP,\mu\rangle=a_{k,k} \xi +a_{2k}y\in\Z. \label{A}
\\
\langle e_1e_1\mathord{\cdot}\AP,\mu\rangle= \df{\xi}{[(2k-1)!]^2}\in\Z.\label{e1e1A}
\end{subnumcases}
By \eqref{e1e1A}, $\xi = [(2k-1)!]^2 \xi_1$ for some integer $\xi_1$. Let $z=2y-\xi$. The signature equation \eqref{L} and the $\AP$ genus condition \eqref{A} can be written as
\begin{eqnarray*}
[s_k (2k-1)!]^2 \xi_1 + s_{2k}z			&=&		2\sigma \\
\left[ a_k(2k-1)!\right]^2 \xi_1 + a_{2k} z 	&=&		2m
\end{eqnarray*}
for some $m \in \Z$. Using the fact that $s_{2k} = - 2^{4k+1}(2^{4k-1} - 1) a_{2k}$, we use the second equation to eliminate $z$ in the first equation. This yields, after simplification,
	\[2^{4k+1}\left[(2k-1)! a_k\right]^2 (2^{2k} - 1)^2 \xi_1 - 2^{4k+2} (2^{4k-1} - 1) m = 2\sigma.\]
Computing $\nu_2$ of each of the two summands on the left-hand side yields 
	\[4k - 5 - 2\nu_2(k) + \nu_2(\xi_1)\]
and $4k+2 + \nu_2(m)$. Both of these are at least $4k - 5 - 2\nu_2(k)$, so
	\[\nu_2(2\sigma) \geq 4k - 5 - 2\nu_2(k),\]
as claimed.
\end{proof}

\section{Existence of rational projective spaces}
\label{sec: projective-space}

Generalizing the notion of rational projective plane, a simply connected closed smooth manifold $M$ is called a rational projective space if $H^*(M;\Q) \cong \Q[\alpha]/\langle \alpha^{n+1}\rangle, n\geq1$. We let $\QP^n_d$ denote a $(nd)$--dimensional rational projective space where $d$ is the degree of the generator. In \cite{FowlerSu16}, it was shown that higher dimensional analogues of rational Cayley planes, i.e., $\QP^n_8$ for $n > 2$, exist in dimension $8n$ whenever $n$ is odd. We prove the following theorem that extends existence results on rational projective plane to rational projective spaces. 

\begin{nonumbertheorem}[Theorem \ref{thm:QPn}]
If a $\QP^2_{4k}$ exists, then a $\QP^{2m}_{4k/m}$ exists whenever $4k/m \in 2\Z$.
\end{nonumbertheorem}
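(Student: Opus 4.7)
The plan is to apply the Barge--Sullivan rational surgery realization theorem once more, transferring the Pontryagin class data from the hypothesized $\QP^2_{4k}$ to realize the ring $\Q[\beta]/\langle\beta^{2m+1}\rangle$ with $|\beta|=d:=4k/m$. Since $4k/m \in 2\Z$, $d$ is a positive even integer and this ring is graded-commutative of formal dimension $2md=8k$, the same dimension as the assumed $\QP^2_{4k}$. By Theorem \ref{SOiff}, the existence of a $\QP^2_{4k}$ supplies integers $x$ and $y$ satisfying the signature equation \eqref{signature} together with the Hattori--Stong conditions \eqref{e1L} and \eqref{e1e1L}. These same integers will supply the Pontryagin numbers for the $\QP^{2m}_{4k/m}$ candidate.

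Specify the Poincar\'e pairing on $\Q[\beta]/\langle\beta^{2m+1}\rangle$ by $\langle\beta^{2m},\mu\rangle=1$, and define formal rational Pontryagin classes by
\[p_k := x\beta^m, \qquad p_{2k} := y\beta^{2m}, \qquad p_j := 0 \text{ for } j\not\in\{k,2k\}.\]
These assignments are valid since $md=4k$ and $2md=8k$. Because $p_k\cdot p_{2k}\in H^{12k}$ and $p_{2k}^2\in H^{16k}$ vanish for dimensional reasons, the only nonzero top-degree Pontryagin monomials are $p_k^2=x^2\beta^{2m}$ and $p_{2k}=y\beta^{2m}$. In particular, the structural hypothesis of Lemma \ref{el} is satisfied, so the $L$--genus and each class $e_l$ reduce to precisely the polynomial expressions in $p_k$, $p_k^2$, and $p_{2k}$ appearing in the $\QP^2_{4k}$ analysis.

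Pairing with $\mu$ then gives $\langle p_k^2,\mu\rangle = x^2$ and $\langle p_{2k},\mu\rangle = y$, so the Pontryagin numbers are integers, the signature equation $s_{k,k}x^2+s_{2k}y=1$ holds immediately, and the two Hattori--Stong conditions \eqref{e1L} and \eqref{e1e1L} reduce \emph{verbatim} to those already established for the $\QP^2_{4k}$. Since Lemmas \ref{ee} and \ref{e} only require the structural hypothesis that $p_\omega=0$ except possibly for $p_k$, $p_k^2$, and $p_{2k}$, the reduction of Section \ref{sec:ReducingIntegralityConditions} from the full Hattori--Stong system to these two conditions applies unchanged to the new setting. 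By the Barge--Sullivan realization theorem, a simply connected, smooth, closed manifold $M'$ realizing $\Q[\beta]/\langle\beta^{2m+1}\rangle$ with the prescribed Pontryagin classes exists, and by construction $M'$ is a $\QP^{2m}_{4k/m}$.

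The main step requiring care is verifying that one is genuinely free, when invoking Barge--Sullivan for the $\QP^{2m}_{4k/m}$ ring, both to set the extra Pontryagin classes to zero and to normalize the Poincar\'e pairing so that $\langle\beta^{2m},\mu\rangle=1$. Both are legitimate: the input data for the theorem consists precisely of the rational cohomology ring, a Poincar\'e pairing, and a choice of rational Pontryagin classes satisfying the three necessary conditions, all of which we specify explicitly above. Once this freedom is acknowledged, the proof is essentially a direct transfer of the verification of Theorem \ref{SOiff}, with $\alpha$ replaced by $\beta^m$ and all the formal identities of Section \ref{sec:ReducingIntegralityConditions} continuing to hold.
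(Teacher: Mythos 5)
Your proposal is correct and follows essentially the same route as the paper: apply Barge--Sullivan to the ring $\Q[\beta]/\langle\beta^{2m+1}\rangle$ in the same total dimension $8k$, set all Pontryagin classes to zero except $p_k$ and $p_{2k}$, and observe that the signature, Hattori--Stong, and intersection-form conditions then coincide with those already satisfied by the hypothesized $\QP^2_{4k}$. The only cosmetic difference is that you normalize $\langle\beta^{2m},\mu\rangle=1$ outright while the paper keeps $\langle\alpha^{2m},\mu\rangle=r^2$; both are consistent with the requirement that the intersection form be isomorphic to $\langle 1\rangle$.
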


\begin{proof} 

Assume $m$ is an integer such that $4k/m$ is an even integer. Let $\A$ denote the $8k$-dimensional rational graded commutative algebra $\Q[\alpha]/\langle \alpha^{2m+1}\rangle$ where $|\alpha|=4k/m$. Note that $\A$ is realizable as a cohomology ring only if the degree of the generator $|\alpha|=4k/m$ is even. By the Sullivan-Barge rational surgery realization theorem, there exists a $8k$-dimensional closed smooth manifold $M$ such that $H^*(M;\Q)=\A$ if and only if there exist choices of cohomology classes $p_i\in H^{4i}(X;\Q)$ for $i=1,\ldots 2k$, where $X$ is a $\Q$-local space carrying the desired rational cohomology data such that $ H^*(X;\Q)= H^*(X;\Z)=\A$; and a choice of fundamental class $\mu\in H_{8k}(X;\Q)$, such that the pairs $\langle p_{i_1}\cdots p_{i_r},\mu\rangle, i_1+\cdots +i_r=2k$ are integers that satisfy 

\begin{enumerate}
\item[(i)] The signature equation that $\langle \LP(p_1,\ldots,p_{2k}),\mu\rangle=1$
\item[(ii)] The Hattori--Stong integrality conditions that $\langle\Z[e_1,\,e_2, \ldots]\mathord{\mathord{\cdot}}\LP\,,\,\mu\rangle\in\Z[1/2]$. 
\item[(iii)] The rational intersection form $\langle \cdot \cup \cdot, \mu\rangle$ is isomorphic to $\langle 1\rangle$.
\end{enumerate}

If we let the choice of cohomology classes be $p_i=0$ except $p_k$ and $p_k^2$, Conditions (i) and (ii) become exactly the same as the corresponding conditions to realize a $\QP^2$, which are stated as (1) and (2) in section 1. Moreover, the substitution stated in (3) in the $\QP^2$ case still holds true here. By the desired rational cohomology ring $\A$, any choice of cohomology classes $p_k$ and $p_{2k}$ can be written as $p_k=a\alpha^m$ and $p_{2k}=b\alpha^{2m}$ for some rational numbers $a$ and $b$. Under a choice of orientation, (iii) requires the rational intersection form with respect to $\mu$ to be isomorphic to $\langle 1 \rangle$ and the signature is $1$, so the choice of $\mu$ must satisfy  $\langle \alpha^{2m}, \mu\rangle= r^2$ for some rational number $r$, therefore we may still express the pairs $\langle p_k^2,\mu\rangle=a^2r^2= x^2$ and $\langle p_{2k},\mu\rangle=br^2= y$, where $x$ and $y$ must be integers. So under such choice of having all $p_i=0$ except $p_k$ and $p_k^2$, the sufficient conditions to realize a $\QP^2_{4k}$ in dimension $8k$ are also the sufficient conditions to realize a $\QP^{2m}_{4k/m}$ in dimension $8k$.
\end{proof}

As an application of this theorem, combined with the the existence of a $\QP^2$ in dimensions $32$, $128$, and $256$, we have the following existence results of rational projective spaces.

\begin{corollary}
Each of the following manifolds exists.
	\begin{enumerate}[{I.}]
	\item Higher dimensional analogues, $\QP_8^n$ for $n \in \{4,16,32\}$, of rational Cayley planes.
	\item Higher dimensional analogues $\QP_{16}^8$ and $\QP_{16}^{16}$ of the $32$--dimensional $\QP^2$.
	\item Manifolds $\QP_{32}^4$ and $\QP_{32}^8$, despite the fact that no rational projective plane exists with generator in degree $32$.
	\item Manifold $\QP_{64}^4$.
	\end{enumerate}
\end{corollary}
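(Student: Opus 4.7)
The plan is to deduce each of the four items directly from Theorem \ref{thm:QPn}, applied to the three rational projective planes $\QP^2_{16}$, $\QP^2_{64}$, and $\QP^2_{128}$ that exist in dimensions $32$, $128$, and $256$ by Theorem \ref{thm:UpTo512}. Recall that Theorem \ref{thm:QPn} produces a $\QP^{2m}_{4k/m}$ from a given $\QP^2_{4k}$ (whose dimension is $8k$) whenever $4k/m$ is an even positive integer. So for each listed target $\QP^n_d$, I would set $m = n/2$ and $k = nd/8$, and then verify two conditions: that $d = 4k/m$ is a positive even integer (so that Theorem \ref{thm:QPn} applies), and that $8k \in \{32,128,256\}$ (so that the required $\QP^2_{4k}$ is supplied by Theorem \ref{thm:UpTo512}).

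Each check is a one-line computation. For I the targets $\QP_8^4$, $\QP_8^{16}$, $\QP_8^{32}$ come from $\QP^2_{16}$, $\QP^2_{64}$, $\QP^2_{128}$ respectively (with $m=2,8,16$); for II the targets $\QP_{16}^8$ and $\QP_{16}^{16}$ come from $\QP^2_{64}$ and $\QP^2_{128}$ (with $m=4,8$); for III the targets $\QP_{32}^4$ and $\QP_{32}^8$ come from $\QP^2_{64}$ and $\QP^2_{128}$ (with $m=2,4$); and for IV the target $\QP_{64}^4$ comes from $\QP^2_{128}$ (with $m=2$). The parity condition $4k/m \in 2\Z$ is automatic in every case, since $d \in \{8,16,32,64\}$.

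There is no substantive obstacle: once Theorem \ref{thm:QPn} and the existence statements of Theorem \ref{thm:UpTo512} are granted, the corollary is a matter of parameter bookkeeping. The only conceptual point worth emphasizing is the one the statement already flags for item III, namely that $\QP^2_{32}$ itself does \emph{not} exist, since its would-be dimension $64$ is ruled out by Theorem \ref{thm:UpTo512}; nevertheless the higher-dimensional companions $\QP^4_{32}$ and $\QP^8_{32}$ do arise, obtained by pulling back from $\QP^2_{64}$ and $\QP^2_{128}$ via Theorem \ref{thm:QPn}.
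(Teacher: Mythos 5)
Your proposal is correct and is exactly the argument the paper intends: the corollary is stated as a direct application of Theorem \ref{thm:QPn} to the rational projective planes $\QP^2_{16}$, $\QP^2_{64}$, and $\QP^2_{128}$ supplied by Theorem \ref{thm:UpTo512}, and your parameter checks ($m=n/2$, $k=nd/8$, $4k/m=d$ even, $nd\in\{32,128,256\}$) all come out right. Nothing is missing.
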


\begin{remark}
Note that a dimension not supporting a $\QP^2_{4k}$ is not necessarily one that does not support a $\QP^{2m}_{4k/m}$. The sufficient conditions (i), (ii), and (iii) in the proof above might be realized by choices of cohomology classes with nonzero $p_i$ other than $p_k$ and $p_{2k}$.  
\end{remark}

\begin{remark}
It is natural to ask if one can obtain a general existence theorem for rational projective spaces similar to the quadratic residue equation stated in Theorem \ref{case1} and Theorem \ref{case2} for rational projective planes. For the case of $\QP^4_{4k}$, which has rational cohomology ring $\Q[\alpha]/\langle \alpha^{5}\rangle,\  |\alpha|=4k$, as addressed in \cite[Remark 6.2]{FowlerSu16}, the signature equation becomes a quartic Diophantine equation with 4 unknowns if we assume each of the four Pontryagin classes $p_k$, $p_{2k}$, $p_{3k}$, and $p_{4k}$ could be nonzero. It also remains to be seen whether one can simplify the Hattori--Stong integrality conditions in this case.
\end{remark}


\begin{thebibliography}{BLLV74}

\bibitem[Ada60]{Adams60}
J.F. Adams.
\newblock {On the non-existence of elements of Hopf invariant one}.
\newblock {\em Ann. of Math.}, 72(1):20--104, 1960.

\bibitem[And69]{Anderson69}
D.R. Anderson.
\newblock On homotopy spheres bounding highly connected manifolds.
\newblock {\em Trans. Amer. Math. Soc.}, 139:155--161, 1969.

\bibitem[Bar76]{Barge76}
J.~Barge.
\newblock Structures diff\'erentiables sur les types d'homotopie rationnelle
  simplement connexes.
\newblock {\em Ann. Sci. \'Ecole Norm. Sup. (4)}, 9(4):469--501, 1976.

\bibitem[BLLV74]{Barge74}
J.~Barge, J.~Lannes, F.~Latour, and P.~Vogel.
\newblock {$\Lambda $}-sph\`eres.
\newblock {\em Ann. Sci. \'Ecole Norm. Sup. (4)}, 7:463--505 (1975), 1974.

\bibitem[Car53]{Carlitz53}
L.~Carlitz.
\newblock Some congruences for the {B}ernoulli numbers.
\newblock {\em Amer. J. Math.}, 75:163--172, 1953.

\bibitem[FS16]{FowlerSu16}
J.~Fowler and Z.~Su.
\newblock Smooth manifolds with prescribed rational cohomology ring.
\newblock {\em Geom. Dedicata}, 182:215--232, 2016.

\bibitem[How95]{Howard95}
F.T. Howard.
\newblock Applications of a recurrence for the {B}ernoulli numbers.
\newblock {\em J. Number Theory}, 52(1):157--172, 1995.

\bibitem[IR90]{IrelandRosen90}
K.~Ireland and M.~Rosen.
\newblock {\em A classical introduction to modern number theory}, volume~84 of
  {\em Graduate Texts in Mathematics}.
\newblock Springer-Verlag, New York, second edition, 1990.

\bibitem[Liu62]{Liulevicius62}
A.~Liulevicius.
\newblock {The factorization of cyclic reduced powers by secondary cohomology
  operations}.
\newblock {\em Mem. Amer. Math. Soc.}, 42, 1962.

\bibitem[MS74]{MilnorStasheff}
J.W. Milnor and J.D. Stasheff.
\newblock {\em Characteristic classes}.
\newblock Princeton University Press, Princeton, N. J.; University of Tokyo
  Press, Tokyo, 1974.
\newblock Annals of Mathematics Studies, No. 76.

\bibitem[Sla95]{Slavutskii95}
I.S. Slavutskii.
\newblock A note on {B}ernoulli numbers.
\newblock {\em J. Number Theory}, 53(2):309--310, 1995.

\bibitem[Su14]{Su14}
Z.~Su.
\newblock {Rational analogs of projective planes}.
\newblock {\em Algebr. Geom. Topol.}, 14:421--438, 2014.

\bibitem[Sul77]{Sullivan77}
D.~Sullivan.
\newblock Infinitesimal computations in topology.
\newblock {\em Inst. Hautes \'Etudes Sci. Publ. Math.}, (47):269--331 (1978),
  1977.

\bibitem[SY61]{ShimadaYamanoshita61}
N.~Shimada and T.~Yamanoshita.
\newblock {On triviality of the mod $p$ Hopf invariant}.
\newblock {\em Jpn. J. Math.}, 31:1--25, 1961.

\bibitem[Wag]{Wagstaff13}
S.~Wagstaff.
\newblock List of prime factors of the absolute values of the bernoulli
  numerators.
\newblock \url{http://homes.cerias.purdue.edu/~ssw/bernoulli/bnum}.
\newblock 2013-06-15.

\end{thebibliography}

\end{document}